\newtheorem{theorem}{Theorem}[section]
\newtheorem{proposition}[theorem]{Proposition}
\newtheorem{lemma}[theorem]{Lemma}
\newtheorem{corollary}[theorem]{Corollary}
\theoremstyle{definition}
\newtheorem{definition}[theorem]{Definition}
\theoremstyle{remark}
\newtheorem{remark}[theorem]{Remark}
\newtheorem{example}[theorem]{Example}
\numberwithin{equation}{section}
\title{A Hilbert space approach\\ to difference equations}
\author[1]{Konrad Kitzing}
\author[1,2]{Rainer Picard}
\author[1,3]{Stefan Siegmund}
\author[1]{\\ Sascha Trostorff}
\author[2]{Marcus Waurick}
\affil[1]{Technische Universit\"{a}t Dresden, Faculty of Mathematics,
Institute of Analysis, Zellescher Weg 12-14, 01069 Dresden, Germany}
\affil[2]{Department of Mathematics and Statistics, University of Strathclyde, 26 Richmond Street, Glasgow G1 1XH, Scotland}
\affil[3]{Technische Universit\"{a}t Dresden, Faculty of Mathematics,
Center for Dynamics, Zellescher Weg 12-14, 01069 Dresden, Germany}
\renewcommand{\phi}{\varphi}
\renewcommand{\rho}{\varrho}
\newcommand{\eps}{\varepsilon}
\newcommand{\R}{\mathbb{R}}
\newcommand{\C}{\mathbb{C}}
\newcommand{\N}{\mathbb{N}}
\newcommand{\Z}{\mathbb{Z}}
\newcommand{\cZ}{\mathcal{Z}}
\newcommand{\cc}{\operatorname{cc}}
\newcommand{\setm}[2]{\{\hspace*{0,07em}#1;#2\hspace*{0,07em}\}}
\DeclarePairedDelimiter{\set}{\{}{\}}
\begin{document}

\maketitle

\abstract{We consider general difference equations $u_{n+1} = F(u)_n$ for $n \in \Z$ on exponentially weighted $\ell_2$ spaces of two-sided Hilbert space valued sequences $u$ and discuss initial value problems. As an application of the Hilbert space approach, we characterize exponential stability of linear equations and prove a stable manifold theorem for causal nonlinear difference equations.}

\section{Introduction}

Let $H$ be a separable Hilbert space over $\C$ with scalar product $\langle {}\cdot{}, {}\cdot{} \rangle_{\!H}$, linear in the second and conjugate linear in the first argument, let $F \colon H^\Z \to H^\Z$. In this paper, we discuss difference equations
\begin{equation}\label{diffeq}
   u_{n+1} = F(u)_n
   \qquad
   (n \in \Z)
\end{equation}
and their solutions $u \in \ell$ on exponentially weighted sequence spaces $\ell \subseteq H^\Z$ (see Lemma \ref{lem:ell2rho}).
With the shift $\tau \colon H^\Z \to H^\Z$, $(u_n)_{n \in \Z} \mapsto (u_{n+1})_{n \in \Z}$, (see Lemma \ref{lem:Shift}), equation
\eqref{diffeq} on $\ell$ can be reformulated equivalently as
\begin{equation}\label{tau-diffeq}
   \tau u = F(u)
   \qquad
   (u \in \ell).
\end{equation}
Since $\tau$ is invertible with inverse $\tau^{-1} \colon H^\Z \to H^\Z$, $(u_n)_{n \in \Z} \mapsto (u_{n-1})_{n \in \Z}$, each solution $u \in \ell$ of \eqref{tau-diffeq} satisfies $u = \tau^{-1} F(u)$.  If $\ell \subseteq H^\Z$ is a Banach space and $\tau^{-1} \circ F \colon \ell \to \ell$ a contraction, Banach's fixed point theorem yields a unique solution to \eqref{tau-diffeq} (see Theorem \ref{t:existence1}).

For example, the search for bounded solutions of an implicit (backward) Euler scheme approximation $u_{n+1} = u_n + h f(u_{n+1})$, $n \in \Z$, $h > 0$, of a differential equation with right-hand side $f \colon \R^d \to \R^d$, is of the form \eqref{tau-diffeq} with $F(u)_n = u_n + h f(u_{n+1})$ for $n \in \Z$ on the sequence space $\ell = \ell_{\infty}(\Z; \R^d) = \setm{x \colon \Z \to \R^d}{\sup_{k \in \Z} |x_k|_{\R^d} < \infty}$. On the other hand, if the implicit Euler scheme $u_{n+1} =  F(u)_n$ is considered on $\N \coloneqq \Z_{\geq 0}$ with an initial condition $u_0 = x \in \R^n$, it is well-known that for each $u_n$ the implicit equation $u_{n+1} = u_n + h f(u_{n+1})$ needs to be solved iteratively, e.g.\ with the Newton method, to compute $u_{n+1}$ from $u_n$. We will prove in Theorem \ref{t:initial_value_problem} that initial value problems can be equivalently reformulated into \eqref{diffeq} on $\Z$ by adding the initial value $x$ as an impulse $\delta_k x \coloneqq (\delta_{k,n} x)_{n \in \Z} \in H^\Z$ at $k = -1$, where $\delta_{k,n}$ denotes the Kronecker delta symbol for $k, n \in \Z$, if $F(u)_n$ only depends on $u_k$ for $k \in \Z_{\leq n}$, i.e.\ for so-called causal $F$ (Definition \ref{d:causal}). Note that for the implicit Euler scheme, $F(u)_n$ depends on $u_{n+1}$ and $F$ is not causal. 

Let $L(H)$ denote the space of bounded linear operators from $H$ to $H$. In Section 3 we characterize exponential stability of linear difference equations $u_{n+1} = A u_n$ for $A \in L(H)$ (Theorem \ref{t:expstab}), by utilizing the $\cZ$ transform (Theorem \ref{t:ztransform}). In Section 4 we prove a stable manifold theorem (Theorem \ref{t:stable-manifold}) for causal (Definitions \ref{d:linearcausal} \& \ref{d:causal}) difference equations $u_{n+1} = A u_n + F(u)_n$ with $F(0) = 0$. Section 2 is devoted to basic properties of sequence spaces and existence of solutions. 

In this paper we follow an approach which goes back to \cite{Picard:McGhee2011} (see also \cite{Kalauch:Picard:Siegmund:Trostorff:Waurick2014, Picard:Trostorff:Waurick2014} and the references therein).

\section{Difference equations on sequence spaces}

\begin{lemma}[Exponentially weighted $\ell_p$ spaces]\label{lem:ell2rho}
Let  $1 \leq p < \infty$, $\rho >0$. Define
\begin{align*}
   \ell_{p,\rho}(\Z; H)
   &\coloneqq
   \setm{x \in H^\Z}{\sum_{k \in \Z} |x_k|_H^p \rho^{-p k} < \infty},
\\
   \ell_{\infty,\rho}(\Z; H)
   &\coloneqq
   \setm{x \in H^\Z}{\sup_{k \in \Z} |x_k|_H \rho^{-k} < \infty}.
\end{align*}
Then $\ell_{p,\rho}(\Z; H)$ and $\ell_{\infty,\rho}(\Z; H)$ are Banach spaces with norms
\begin{equation*}
   |x|_{\ell_{p,\rho}(\Z; H)} 
   \coloneqq
   \Big( \sum_{k \in \Z} |x_k|_H^p \rho^{-p k} \Big)^{\frac{1}{p}}
   \qquad
   (x \in \ell_{p,\rho}(\Z; H))
\end{equation*}
and
\begin{equation*}
   |x|_{\ell_{\infty,\rho}(\Z; H)}
   \coloneqq
   \sup_{k \in \Z} |x_k|_H \rho^{-k}
   \qquad
   (x \in \ell_{\infty,\rho}(\Z; H)),
\end{equation*}
respectively. Moreover, $\ell_{2,\rho}(\Z; H)$ is a Hilbert space with the inner product
\begin{equation*}
   \langle x, y \rangle_{\ell_{2,\rho}(\Z; H)}
   \coloneqq
   \sum_{k \in \Z} \big\langle x_k, y_k \big\rangle_{\!H} \rho^{-2 k}
   \qquad
   (x, y \in \ell_{2,\rho}(\Z; H)).
\end{equation*}
Write $\ell_{p}(\Z; H) \coloneqq \ell_{p,1}(\Z; H)$ for $1 \leq p \leq \infty$.
\end{lemma}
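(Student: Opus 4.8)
The plan is to verify the norm (respectively inner product) axioms first and then to establish completeness directly through coordinatewise limits. I deliberately avoid reducing to the unweighted space via the isometry $x \mapsto (\rho^{-k} x_k)_{k \in \Z}$, since the case $\rho = 1$ is itself part of the claim and so cannot be invoked as already known; instead I treat all $\rho > 0$ uniformly.

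I would begin with the norm properties for $1 \leq p < \infty$. Positivity and absolute homogeneity of $|\cdot|_{\ell_{p,\rho}(\Z; H)}$ are immediate from the corresponding properties of $|\cdot|_H$. For the triangle inequality, the pointwise estimate $|x_k + y_k|_H \rho^{-k} \leq |x_k|_H \rho^{-k} + |y_k|_H \rho^{-k}$ reduces the claim to Minkowski's inequality applied to the scalar sequences $(|x_k|_H \rho^{-k})_{k \in \Z}$ and $(|y_k|_H \rho^{-k})_{k \in \Z}$ in the classical space $\ell_p(\Z; \C)$. The case $p = \infty$ is analogous, with the supremum replacing the sum.

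Next comes completeness. Given a Cauchy sequence $(x^{(n)})_{n}$ in $\ell_{p,\rho}(\Z; H)$, the estimate $|x^{(n)}_k - x^{(m)}_k|_H \leq \rho^{k} |x^{(n)} - x^{(m)}|_{\ell_{p,\rho}(\Z; H)}$ shows that $(x^{(n)}_k)_n$ is Cauchy in $H$ for each fixed $k$; by completeness of $H$ it converges to some $x_k \in H$, and I set $x \coloneqq (x_k)_{k \in \Z}$. To see $x^{(n)} \to x$ in the weighted norm (which simultaneously yields $x \in \ell_{p,\rho}(\Z; H)$), I would fix $\eps > 0$, choose $N$ with $|x^{(n)} - x^{(m)}|_{\ell_{p,\rho}(\Z; H)} \leq \eps$ for $n, m \geq N$, and for every finite $S \subseteq \Z$ pass to the limit $m \to \infty$ in the finite sum $\sum_{k \in S} |x^{(n)}_k - x^{(m)}_k|_H^p \rho^{-pk}$, obtaining $\sum_{k \in S} |x^{(n)}_k - x_k|_H^p \rho^{-pk} \leq \eps^p$ for $n \geq N$. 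Taking the supremum over all finite $S$ gives $|x^{(n)} - x|_{\ell_{p,\rho}(\Z; H)} \leq \eps$, hence $x^{(n)} - x \in \ell_{p,\rho}(\Z; H)$, so $x = x^{(n)} - (x^{(n)} - x)$ lies in the space and $x^{(n)} \to x$; the case $p = \infty$ proceeds identically with suprema in place of the finite sums.

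Finally, for $p = 2$ I would check that $\langle \cdot, \cdot \rangle_{\ell_{2,\rho}(\Z; H)}$ is a well-defined inner product: the defining series converges absolutely because, by the Cauchy--Schwarz inequality in $H$ followed by Cauchy--Schwarz in $\ell_2(\Z; \C)$, one has $\sum_{k \in \Z} |\langle x_k, y_k \rangle_H| \rho^{-2k} \leq |x|_{\ell_{2,\rho}(\Z; H)} \, |y|_{\ell_{2,\rho}(\Z; H)}$; conjugate symmetry, sesquilinearity, and positive definiteness then follow termwise from the corresponding properties of $\langle \cdot, \cdot \rangle_H$, and the induced norm equals $|\cdot|_{\ell_{2,\rho}(\Z; H)}$, so completeness is already covered by the Banach-space case. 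The one delicate point is the interchange of limit and summation in the completeness argument; this is precisely why I restrict to finite subsets $S$ before letting $m \to \infty$ and only afterwards take the supremum over $S$. Everything else is a termwise transfer of a scalar or an $H$-valued statement.
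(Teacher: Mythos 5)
The paper states this lemma without proof, treating it as standard, so there is nothing to compare your argument against. Your proposal is correct and complete: the reduction of the triangle inequality to scalar Minkowski via $(|x_k|_H\rho^{-k})_{k\in\Z}$, the completeness argument with coordinatewise limits and the supremum over finite index sets, and the Cauchy--Schwarz verification that the inner product is well defined and induces the stated norm are all sound, and you correctly isolate the one delicate point (interchanging $m\to\infty$ with the infinite sum) and handle it properly.
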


\begin{remark}[Intersections of weighted $\ell_p$ spaces]
Let $1 \leq p \leq \infty$, $0 < \rho_1 < \rho_2$. Then
\begin{equation*}
   \ell_{p,\rho_1}(\Z; H) \cap \ell_{p,\rho_2}(\Z; H)
   =
   \bigcap_{\rho \in [\rho_1, \rho_2]} \ell_{p,\rho}(\Z; H).
\end{equation*}
\end{remark}

\begin{lemma}[Natural embedding of one-sided sequences]\label{lem:one-sided}
Let $1 \leq p \leq \infty$, $\rho > 0$, $a \in \Z$. Define
\begin{equation*}
   \ell_{p,\rho}(\Z_{\geq a}; H)
   \coloneqq
   \setm{x {\restriction_{\Z_{\geq a}}}}{x \in \ell_{p,\rho}(\Z; H)},
\end{equation*}
and for $x \in H^{\Z_{\geq a}}$ define $\iota x \in H^{\Z}$ by
\begin{equation*}
   (\iota x)_k
   \coloneqq
   \begin{cases}
      0 & \text{if } k < a,
   \\
      x_k & \text{if } k \geq a.
   \end{cases}
\end{equation*}
Then $\ell_{p,\rho}(\Z_{\geq a}; H)$ is a Banach space with norm $|\cdot|_{\ell_{p,\rho}(\Z_{\geq a}; H)} \coloneqq  |\iota \cdot|_{\ell_{p,\rho}(\Z; H)}$, and
\begin{equation*}
   \iota \colon \ell_{p,\rho}(\Z_{\geq a}; H) \hookrightarrow \ell_{p,\rho}(\Z; H)
\end{equation*}
is an isometric embedding. Write $\ell_{p,\rho}(\Z_{\geq a}; H) \subseteq \ell_{p,\rho}(\Z; H)$.
\end{lemma}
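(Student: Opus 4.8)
The plan is to realize $\ell_{p,\rho}(\Z_{\geq a}; H)$ as a closed subspace of the Banach space $\ell_{p,\rho}(\Z; H)$ via the extension-by-zero map $\iota$, and then transport completeness back through $\iota$. Throughout I would write $V \coloneqq \ell_{p,\rho}(\Z; H)$, which is a Banach space by Lemma \ref{lem:ell2rho}, and set
\[
   M \coloneqq \setm{x \in V}{x_k = 0 \text{ for all } k \in \Z_{<a}}.
\]
First I would check that $\iota$ maps $H^{\Z_{\geq a}}$ linearly and injectively into $H^\Z$: linearity is immediate from the coordinatewise definition, and injectivity holds because $\iota y = 0$ forces $y_k = 0$ for all $k \geq a$. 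The key identification is that $\iota$ restricts to a bijection of $\ell_{p,\rho}(\Z_{\geq a}; H)$ onto $M$. Indeed, if $y = x {\restriction_{\Z_{\geq a}}}$ with $x \in V$, then $\iota y$ agrees with $x$ on $\Z_{\geq a}$ and vanishes below $a$, so its defining sum (for finite $p$) or supremum (for $p = \infty$) is taken over a subset of that of $x$ and is therefore finite; hence $\iota y \in V$ and, vanishing below $a$, $\iota y \in M$. Conversely every $m \in M$ satisfies $m = \iota(m{\restriction_{\Z_{\geq a}}})$ with $m{\restriction_{\Z_{\geq a}}} \in \ell_{p,\rho}(\Z_{\geq a}; H)$, so $\iota$ is onto $M$.

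The crux is then showing that $M$ is closed in $V$. For each $k \in \Z$ the coordinate evaluation $\pi_k \colon V \to H$, $x \mapsto x_k$, is a bounded linear map, since $|x_k|_H = \rho^{k} \, |x_k|_H \rho^{-k} \leq \rho^{k} |x|_{V}$ in both the finite-$p$ and the $p = \infty$ case. Consequently $M = \bigcap_{k \in \Z_{<a}} \ker \pi_k$ is an intersection of closed subspaces and hence closed. Being a closed subspace of the Banach space $V$, the space $M$ is itself complete.

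It remains to transport this to $\ell_{p,\rho}(\Z_{\geq a}; H)$. By injectivity of $\iota$ the assignment $|y|_{\ell_{p,\rho}(\Z_{\geq a}; H)} \coloneqq |\iota y|_{V}$ is positive definite, and it inherits homogeneity and the triangle inequality from the norm on $V$ through the linearity of $\iota$, so it is a norm. With this norm $\iota$ is by construction an isometry onto $M$; since it is moreover a linear bijection onto the complete space $M$, the domain $\ell_{p,\rho}(\Z_{\geq a}; H)$ is complete, i.e.\ a Banach space, and $\iota \colon \ell_{p,\rho}(\Z_{\geq a}; H) \hookrightarrow V$ is an isometric embedding because $|\iota y|_V = |y|_{\ell_{p,\rho}(\Z_{\geq a}; H)}$ holds by definition.

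I expect the only point genuinely requiring care to be the closedness of $M$, that is, the continuity of the coordinate evaluations $\pi_k$, since everything else is either a direct consequence of the coordinatewise definition of $\iota$ or a standard transport of completeness along an isometric isomorphism. The cases $1 \leq p < \infty$ and $p = \infty$ can be treated together, noting that both the summation estimate identifying $\iota y \in V$ and the coordinate bound $|x_k|_H \leq \rho^k |x|_V$ hold verbatim in each.
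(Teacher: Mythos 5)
Your proof is correct: realizing $\ell_{p,\rho}(\Z_{\geq a};H)$ via $\iota$ as the closed subspace $M$ of $\ell_{p,\rho}(\Z;H)$ consisting of sequences vanishing below $a$ (closedness following from the boundedness of the coordinate evaluations, $|x_k|_H\leq\rho^k|x|_{\ell_{p,\rho}(\Z;H)}$) and transporting completeness back along the isometric linear bijection is exactly the standard argument. The paper states this lemma without proof, treating it as routine, so there is nothing to compare against; your write-up supplies the omitted details correctly, including the one point that genuinely needs checking, namely that $\iota y$ lands in $\ell_{p,\rho}(\Z;H)$ so that the norm is well-defined.
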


\begin{lemma}[Scale of one-sided weighted $\ell_p$ spaces]\label{l:l1emb}
Let $1 \leq p < q \leq \infty$, $\rho, \eps > 0$, $a \in \Z$. Then

(a) $\ell_{p,\rho}(\Z_{\geq a}; H) \subsetneq \ell_{q,\rho}(\Z_{\geq a}; H)$,

(b) $\ell_{q,\rho}(\Z_{\geq a}; H) \subsetneq \ell_{p,\rho + \eps}(\Z_{\geq a}; H)$.
\end{lemma}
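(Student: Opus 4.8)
The plan is to strip away the exponential weights by the isometric isomorphism $T_\rho \colon \ell_{p,\rho}(\Z_{\geq a};H) \to \ell_p(\Z_{\geq a};H)$, $(T_\rho x)_k \coloneqq \rho^{-k} x_k$, which identifies membership in $\ell_{p,\rho}(\Z_{\geq a};H)$ with summability (respectively boundedness for $p=\infty$) of $(\rho^{-k}x_k)_{k \geq a}$ and, crucially, does not depend on $p$. For part (a) the weight $\rho$ is the \emph{same} on both sides, so $T_\rho$ reduces the claim to the classical unweighted inclusion $\ell_p(\Z_{\geq a};H) \subseteq \ell_q(\Z_{\geq a};H)$.

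For the inclusion in (a) I would argue as usual: if $y \coloneqq T_\rho x \in \ell_p(\Z_{\geq a};H)$ with $p < \infty$, then $|y_k|_H \to 0$, so $|y_k|_H \leq 1$ for all but finitely many $k$, and for those indices $|y_k|_H^q \leq |y_k|_H^p$; hence $\sum_{k \geq a} |y_k|_H^q < \infty$ (the case $q = \infty$ being immediate from $|y_k|_H \leq |y|_{\ell_p}$). Properness follows by exhibiting one sequence in the larger space but not the smaller: fixing a unit vector $e \in H$, the choice $x_k \coloneqq \rho^k k^{-1/p} e$ for $k \geq \max(a,1)$ (and $0$ otherwise) has $\rho^{-k}x_k = k^{-1/p}e$, which lies in $\ell_q$ because $q/p > 1$ but not in $\ell_p$ because $\sum k^{-1} = \infty$; the same sequence is bounded, which also settles $q = \infty$.

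For (b) the two weights differ, and this is where the real work sits. Writing $\theta \coloneqq \rho/(\rho+\eps) \in (0,1)$ I would factor $(\rho+\eps)^{-pk} = \rho^{-pk}\theta^{pk}$, so that for $x \in \ell_{q,\rho}(\Z_{\geq a};H)$ with $q < \infty$,
\[
   \sum_{k \geq a} |x_k|_H^p (\rho+\eps)^{-pk}
   = \sum_{k \geq a} \big( |x_k|_H^p \rho^{-pk} \big)\, \theta^{pk}.
\]
Applying H\"older's inequality with the conjugate exponents $q/p$ and $q/(q-p)$ bounds this by
\[
   \Big( \sum_{k \geq a} |x_k|_H^q \rho^{-qk} \Big)^{p/q}
   \Big( \sum_{k \geq a} \theta^{pkq/(q-p)} \Big)^{(q-p)/q},
\]
where the first factor equals $|x|_{\ell_{q,\rho}}^p < \infty$ and the second is a convergent geometric series since $0 < \theta < 1$. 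The endpoint $q = \infty$ is even simpler: from $|x_k|_H \leq |x|_{\ell_{\infty,\rho}}\,\rho^k$ the target sum is dominated by $|x|_{\ell_{\infty,\rho}}^p \sum_{k \geq a} \theta^{pk} < \infty$.

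For properness in (b) the sequence $x_k \coloneqq k \rho^k e$ (for $k \geq a$) does the job uniformly in $q$: since $(\rho+\eps)^{-pk}|x_k|_H^p = |k|^p \theta^{pk}$ is summable (a polynomial against a geometric factor), $x \in \ell_{p,\rho+\eps}(\Z_{\geq a};H)$, whereas $\rho^{-k}|x_k|_H = |k|$ is neither $q$-summable for any finite $q$ nor bounded, so $x \notin \ell_{q,\rho}(\Z_{\geq a};H)$. The main obstacle is precisely the tension in (b): passing to the smaller exponent $p < q$ enlarges the scale, and one must verify that the strictly larger weight $\rho+\eps$ supplies a genuinely geometric damping factor $\theta^{pk}$ overriding this enlargement — the H\"older split above is the clean way to make the geometric gain beat the loss, and it is also the only point where the two parts genuinely diverge. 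I would also note that properness in both parts relies on $H \neq \{0\}$.
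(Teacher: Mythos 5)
Your proof is correct and follows essentially the same route as the paper's: a tail estimate reducing (a) to comparing powers of a quantity eventually below $1$, and H\"older's inequality against a convergent geometric series in $\rho/(\rho+\eps)$ for (b) --- the only cosmetic difference being that the paper lands in $\ell_{1,\rho+\eps}$ using the exponents $q$ and $q'$ and then invokes (a), whereas you go directly into $\ell_{p,\rho+\eps}$ with the conjugate pair $q/p$ and $q/(q-p)$. One genuine addition on your side: you exhibit explicit sequences ($k^{-1/p}\rho^k e$ and $k\rho^k e$) witnessing the strictness of both inclusions, which the paper's proof in fact omits even though the lemma asserts $\subsetneq$.
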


\begin{proof} (a): Let $x \in \ell_{p,\rho}(\Z_{\geq a}; H)$, i.e.\ $\sum_{k=a}^{\infty} |x_k|_H^p \rho^{-p k} < \infty$. Then $\exists k_0 \in \Z_{\geq a}, C \in [0,1) \colon$
\begin{equation*}
   |x_k|_H^p \rho^{-pk} \leq C
   \qquad (k \geq k_0).
\end{equation*}
Hence, $\sup_{k \geq k_0} |x_k|_H \rho^{-k} \leq C^{\frac{1}{p}}$, proving that $x \in \ell_{\infty,\rho}(\Z_{\geq a}; H)$. If $q < \infty$, then $(|x_k|_H \rho^{-k})^{q - p} < 1$ for $k \geq k_0$ and therefore
\begin{equation*}
   |x_k|_H^q \rho^{-kq}
   =
   |x_k|_H^p \rho^{-kp} (|x_k|_H \rho^{-k})^{q - p}
   <
   |x_k|_H^p \rho^{-kp}
   \qquad (k \geq k_0),
\end{equation*}
proving that $x \in \ell_{q,\rho}(\Z_{\geq a}; H)$.

(b): Let $x \in \ell_{q,\rho}(\Z_{\geq a}; H)$. Show that $x \in \ell_{1,\rho + \eps}(\Z_{\geq a}; H)$. By (a), $x \in \ell_{p,\rho + \eps}(\Z_{\geq a}; H)$.
If $q = \infty$, using the fact that $|x_k|_H \rho^{-k} \leq |x|_{\ell_{\infty,\rho}(\Z_{\geq a}; H)}$ for $k \in \Z_{\geq a}$,
\begin{equation*}
   \sum_{k=a}^{\infty} |x_k|_H (\rho + \eps)^{-k}
   =
   \sum_{k=a}^{\infty} |x_k|_H \rho^{-k} (\tfrac{\rho}{\rho + \eps})^{k}
   \leq
   |x|_{\ell_{\infty,\rho}(\Z_{\geq a}; H)}
   \big( \tfrac{\rho}{\rho + \eps} \big)^{a}
   \big( \tfrac{\rho + \eps}{\eps} \big),
\end{equation*}
proving that $x \in \ell_{1,\rho + \eps}(\Z_{\geq a}; H)$.
If $q < \infty$, let $q' \in (0,\infty)$ with $\frac{1}{q} + \frac{1}{q'} = 1$. Using H\"{o}lder's inequality,
\begin{align*}
   \sum_{k=a}^{\infty} |x_k|_H (\rho + \eps)^{-k}
   &=
   \sum_{k=a}^{\infty} |x_k|_H \rho^{-k} (\tfrac{\rho}{\rho + \eps})^{k}
   \leq
   \Big( \sum_{k=a}^\infty
   |x_k|_H^q \rho^{-kq} 
   \Big)^{\frac{1}{q}}
   \Big( \sum_{k=a}^\infty
   \big( \tfrac{\rho}{\rho + \eps} \big)^{kq'}
   \Big)^{\frac{1}{q'}}
\\
   &=
   |x|_{\ell_{q, \rho}(\Z_{\geq a}; H)}
   \big( \tfrac{\rho}{\rho + \eps} \big)^{aq'}
   \big( 1 - \big( \tfrac{\rho}{\rho + \eps} \big)^{q'} \big)^{-\frac{1}{q'}},
\end{align*}
proving that $x \in \ell_{1,\rho + \eps}(\Z_{\geq a}; H)$.
\end{proof}

\begin{example}{Solution to scalar equation with Kronecker delta impulse}
Let $x \in \C$, $a \in \C\setminus \{0\}$.
We consider the scalar difference equation
\begin{equation*}
   u_{n+1} = a u_n + \delta_{-1,n} x
   \qquad (n \in \Z).
\end{equation*}
A sequence $u \in \C^\Z$ satisfies the preceding difference equation if and only if
\begin{equation*}
   u_{n} 
   = 
   \begin{cases}
      a^{n + 1} c & \text{if } n \in \Z_{\leq -1},
   \\
      a^{n + 1} c + a^n x  & \text{if } n \in \Z_{\geq 0},
   \end{cases}
\end{equation*}
for some $c \in \C$.
In dependence on $c$ the support of $u$ satisfies
\begin{align*}
    &\operatorname{supp} u
    \subseteq
    \begin{cases}
        \Z_{\leq -1} & \text{if } c = -x/a,
        \\
        \Z_{\geq 0} & \text{if } c = 0,
    \end{cases}
\\
   \intertext{and}
\\
   &\operatorname{supp} u = \Z \quad \text{if } c \neq 0, c \neq -x/a.
\end{align*}
Let $\rho > 0$ and let $1 \leq p \leq \infty$.
If $\rho \neq |a|$, we can select a unique solution by requiring that $u \in \ell_{p,\rho}(\Z; \C)$.
Indeed if $\rho \neq |a|$ then
\begin{align*}
   u \in \ell_{p,\rho}(\Z; \C)
   &\quad\Leftrightarrow\quad
   \begin{cases}
      (\rho < |a| \;\wedge\; \operatorname{supp} u \subseteq \Z_{\leq -1}) \;\vee
   \\
      (\rho > |a| \;\wedge\; \operatorname{supp} u \subseteq \Z_{\geq 0})
   \end{cases}
   \\
   &\quad\Leftrightarrow\quad
   \begin{cases}
      (\rho < |a| \;\wedge\; c = -x/a) \;\vee
   \\
      (\rho > |a| \;\wedge\; c = 0).
   \end{cases}
\end{align*}
\end{example}

\pagebreak[4]
\begin{lemma}[Shift operator]\label{lem:Shift}
Let $1 \leq p \leq \infty$, $\rho > 0$. Then
\begin{align*}
   \tau \colon \ell_{p,\rho}(\Z; H) & \rightarrow \ell_{p,\rho}(\Z; H),
\\
   (x_k)_{k \in \Z} & \mapsto (x_{k+1})_{k \in \Z},
\end{align*}
is linear, bounded, invertible, and 
\begin{equation*}
   \|\tau^n\|_{L(\ell_{p,\rho}(\Z; H))} = \rho^n
   \qquad
   (n \in \Z).
\end{equation*}
\end{lemma}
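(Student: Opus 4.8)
The plan is to reduce the entire statement to a single index-shift computation. Linearity of $\tau$, and hence of each power $\tau^n$, is immediate from the pointwise definition, so the real content is the norm identity together with invertibility. I would begin by recording that for every $n \in \Z$ the power $\tau^n$ acts by $(\tau^n x)_k = x_{k+n}$ for $k \in \Z$, so that negative powers are simply backward shifts and all cases can be handled at once.

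For $1 \leq p < \infty$ I would then compute, for arbitrary $x \in H^\Z$,
\[
   \sum_{k \in \Z} |x_{k+n}|_H^p \rho^{-pk}
   =
   \sum_{j \in \Z} |x_j|_H^p \rho^{-p(j-n)}
   =
   \rho^{pn} \sum_{j \in \Z} |x_j|_H^p \rho^{-pj},
\]
using the substitution $j = k+n$, which is a bijection of $\Z$. This shows simultaneously that $\tau^n$ maps $\ell_{p,\rho}(\Z; H)$ into itself and that $|\tau^n x|_{\ell_{p,\rho}(\Z; H)} = \rho^n |x|_{\ell_{p,\rho}(\Z; H)}$ for every such $x$ (taking $p$-th roots, using $\rho > 0$). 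The case $p = \infty$ is entirely analogous, the sum being replaced by a supremum: $\sup_{k \in \Z} |x_{k+n}|_H \rho^{-k} = \rho^n \sup_{j \in \Z} |x_j|_H \rho^{-j}$.

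From the identity $|\tau^n x| = \rho^n |x|$, valid for all $x$, the remaining assertions follow at once. Specializing to $n = 1$ gives boundedness of $\tau$; and since the identity is an exact equality for every $x$, the supremum defining the operator norm equals $\rho^n$, i.e.\ $\|\tau^n\|_{L(\ell_{p,\rho}(\Z; H))} = \rho^n$ (for $n = 0$ this reads $\|\id\| = 1$). Applying the same computation with $n = -1$ shows that the backward shift $(x_k)_{k \in \Z} \mapsto (x_{k-1})_{k \in \Z}$ likewise maps $\ell_{p,\rho}(\Z; H)$ boundedly into itself; a direct check that composing it with $\tau$ yields the identity on both sides then establishes that $\tau$ is invertible, with $\tau^{-1}$ this backward shift.

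I do not anticipate a genuine obstacle: the whole lemma rests on the elementary substitution $j = k+n$ and the fact that it is an automorphism of the index set $\Z$, which is precisely what transfers the weight factor $\rho^{pn}$ (or $\rho^n$) outside the sum (or supremum). The only points requiring mild care are to run the argument uniformly over all $n \in \Z$, including negative $n$, which is where invertibility enters, and to treat the $p = \infty$ norm separately since there the change of variable acts on a supremum rather than on a series.
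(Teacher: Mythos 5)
Your proposal is correct and follows essentially the same route as the paper: both reduce everything to the identity $(\tau^n x)_k = x_{n+k}$ and the reindexing that extracts the factor $\rho^{pn}$ from the sum (respectively $\rho^n$ from the supremum in the case $p=\infty$), yielding the exact isometry-type identity $|\tau^n x| = \rho^n|x|$ from which boundedness, the operator norm, and invertibility all follow. Your write-up is in fact slightly more explicit than the paper's about how the norm identity implies the operator-norm equality and invertibility, but there is no substantive difference in method.
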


\begin{proof}
Let $1 \leq p \leq \infty$, $\rho > 0$, $x \in \ell_{p,\rho}(\Z; H)$, $n \in \Z$. Using the fact that
\begin{equation*}
   (\tau^n x)_{k} = x_{n + k}
   \qquad (k \in \Z),
\end{equation*}
if $p < \infty$, 
\begin{equation*}
   |\tau^n x|_{\ell_{p,\rho}(\Z; H)}^p
   =
   \sum_{k \in \Z} |x_{n + k}|_H^p \rho^{-pk}
   =
   \sum_{k \in \Z} |x_{n + k}|_H^p \rho^{-p(n + k)} \rho^{pn}
   =
   \rho^{pn} |x|_{\ell_{p,\rho}(\Z; H)}^p,
\end{equation*}
proving that $|\tau^n x|_{\ell_{p,\rho}(\Z; H)} = \rho^{n} |x|_{\ell_{p,\rho}(\Z; H)}$, and, if $p = \infty$,
\begin{equation*}
   |\tau^n x|_{\ell_{\infty,\rho}(\Z; H)}
   =
   \sup_{k \in \Z} |x_{n + k}|_H \rho^{-k}
   =
   \sup_{k \in \Z} |x_{n + k}|_H \rho^{-(n + k)} \rho^{n}
   =
   \rho^n |x|_{\ell_{\infty,\rho}(\Z; H)}.
   \qedhere
\end{equation*}
\end{proof}

\begin{theorem}[Existence and uniqueness of solutions]\label{t:existence1}
Let $1 \leq p \leq \infty$, $\rho > 0$, $F \colon \ell_{p,\rho}(\Z; H) \rightarrow \ell_{p,\rho}(\Z; H)$ with
\begin{equation*}
   |F|_{\operatorname{Lip}(\ell_{p,\rho}(\Z; H))}
   \coloneqq
   \sup_{x, y \in \ell_{p,\rho}(\Z; H), x \neq y}
   \frac{|F(x) - F(y)|_{\ell_{p,\rho}(\Z; H)}}{|x-y|_{\ell_{p,\rho}(\Z; H)}}
   < \rho.
\end{equation*}
Then the difference equation
\begin{equation*}
   \tau u = F(u)
   \qquad
   (u \in \ell_{p,\rho}(\Z; H))
\end{equation*}
has a unique solution.
\end{theorem}

\begin{proof}
Then
\begin{align*}
   \mathcal{T} \colon \ell_{p,\rho}(\Z; H) &\mapsto \ell_{p,\rho}(\Z; H)
\\
   u & \mapsto \tau^{-1} F(u)
\end{align*}
is a contraction, since, by Lemma \ref{lem:Shift}, for $x, y \in \ell_{p,\rho}(\Z; H)$,
\begin{equation*}
   |\mathcal{T}(x) - \mathcal{T}(y)|
   \leq
   |\tau^{-1}|_{L(\ell_{p,\rho}(\Z; H))} |F(x) - F(y)|_{\ell_{p,\rho}(\Z; H)}
   = 
   \frac{1}{\rho} |F|_{\operatorname{Lip}(\ell_{p,\rho}(\Z; H))}
   < 1.
\end{equation*}
Its unique fixed point $u = \mathcal{T}(u)$ satisfies $\tau u = F(u)$.
\end{proof}

Explicit difference equations \cite{Elaydi2005} of the form $u_{n+1} =  G(u_n)$ on $\Z_{\geq 0}$ with initial condition $u_0 \in \R^n$ have a unique solution $u \colon \Z_{\geq 0} \to \R^n$ for arbitrary functions $G \colon \R^n \to \R^n$. In the following theorem, solutions of more general initial value problems are characterized as one-sided sequences which solve a difference equation on $\Z$ with an impulse corresponding to the initial condition.

\begin{theorem}[Initial value problems]\label{t:initial_value_problem}
Let $A \in L(H)$, $F \colon H^\Z \to H^\Z$, $u \in H^\Z$, $\operatorname{spt} u \subseteq \Z_{\geq 0}$, $\operatorname{spt} F(u) \subseteq \Z_{\geq 0}$, $x \in H$. Then the following statements are equivalent.
\begin{enumerate}
   \item[(i)] $   u_{n+1} = A u_n + F(u)_n, \quad u_0 = x \qquad (n \in \Z_{\geq 0})$. 
      \hfill\text{(Initial value problem)}
   \item[(ii)] $u_n = A^n x + \sum_{k=0}^{n-1} A^{n-k-1} F(u)_k \qquad (n \in \Z_{\geq 0})$. 
      \hfill\text{(Variation of constants)}
   \item[(iii)] $\tau u = Au + F(u) + \delta_{-1} x$.
         \hfill\text{(Initial condition as impulse)}
\end{enumerate}
\end{theorem}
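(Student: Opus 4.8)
The plan is to establish the two equivalences $\text{(i)} \Leftrightarrow \text{(ii)}$ and $\text{(i)} \Leftrightarrow \text{(iii)}$ separately. The first is the classical variation-of-constants argument on the half-line and uses none of the support hypotheses; the second is a componentwise reading of the two-sided identity in which the two support assumptions do the essential work.

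For $\text{(i)} \Leftrightarrow \text{(ii)}$ I would argue by induction on $n \in \Z_{\geq 0}$. For the forward direction, the base case $n = 0$ reduces the sum in (ii) to the empty sum, leaving $u_0 = x$. For the inductive step, I would substitute the assumed formula for $u_n$ into $u_{n+1} = A u_n + F(u)_n$, pull the factor $A$ through the finite sum so that each power $A^{n-k-1}$ becomes $A^{n-k}$, and absorb the extra term $F(u)_n$ as the new $k = n$ summand; this yields precisely the formula in (ii) with $n$ replaced by $n+1$. The converse is immediate: evaluating (ii) at $n = 0$ gives $u_0 = x$, and subtracting $A u_n$ from $u_{n+1}$ in the closed form leaves only the $k = n$ term, namely $F(u)_n$.

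For $\text{(i)} \Leftrightarrow \text{(iii)}$ I would read (iii) componentwise: since $\delta_{-1} x = (\delta_{-1,n} x)_{n \in \Z}$, the identity $\tau u = A u + F(u) + \delta_{-1} x$ is equivalent to $u_{n+1} = A u_n + F(u)_n + \delta_{-1,n} x$ for every $n \in \Z$, and I would split into three ranges of $n$ according to the position of the impulse. For $n \geq 0$ the Kronecker term vanishes and one recovers exactly the recurrence in (i). For $n \leq -2$ the support hypotheses give $u_{n+1} = u_n = F(u)_n = 0$ while $\delta_{-1,n} = 0$, so the equation reads $0 = 0$ and carries no information. The decisive case is $n = -1$: here $u_{-1} = 0$ because $\operatorname{spt} u \subseteq \Z_{\geq 0}$ and $F(u)_{-1} = 0$ because $\operatorname{spt} F(u) \subseteq \Z_{\geq 0}$, so the equation collapses to $u_0 = x$, which is precisely the initial condition. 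Conversely, starting from (i) together with the standing support hypotheses, these same three case computations reconstruct the full two-sided identity (iii).

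The only genuinely subtle point is the bookkeeping in the case $n = -1$: the impulse is placed at index $-1$ rather than at $0$ precisely so that the shift $\tau$ moves it into the slot governing $u_0$, and it is the two support assumptions that annihilate the $A u_{-1}$ and $F(u)_{-1}$ contributions so that the impulse alone fixes the initial value. Everything else is routine, so I expect no real obstacle; care is only needed to keep the index shifts in the variation-of-constants induction and the case split for (iii) correctly aligned.
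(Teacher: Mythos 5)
Your proposal is correct and uses essentially the same ingredients as the paper's proof: induction for the variation-of-constants equivalence and a componentwise case split at $n \geq 0$, $n = -1$, $n \leq -2$ (with the two support hypotheses killing the $Au_{-1}$ and $F(u)_{-1}$ terms) for the impulse formulation. The only difference is organizational — the paper proves the cycle (i)$\Rightarrow$(ii)$\Rightarrow$(iii)$\Rightarrow$(i) while you prove two separate biconditionals — but the underlying computations are identical.
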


\begin{proof}
(i) $\Rightarrow$ (ii): By induction for $n \in \Z_{\geq 0}$, $u_n = A^n x + \sum_{k=0}^{n-1} A^{n-k-1} F(u)_k$, since
\begin{align*}
   u_{n+1} 
   &=
   A u_n + F(u)_n
   =
   A \Big(A^n x + \sum_{k=0}^{n-1} A^{n-k-1} F(u)_k \Big) + F(u)_n 
   \\
   &=
   A^{n+1} x + \sum_{k=0}^{n} A^{n-k} F(u)_k.
\end{align*}

(ii) $\Rightarrow$ (iii): $u_0 = x$, and for $n \in \Z_{\geq 0}$, $u_{n+1} = A u_n + F(u)_n$, since
\begin{align*}
   u_{n+1} 
   &= 
   A^{n+1} x + \sum_{k=0}^{n} A^{n-k} F(u)_k
   =   
   A \Big(A^n x + \sum_{k=0}^{n-1} A^{n-k-1} F(u)_k \Big) + F(u)_n 
   \\
   &=
   A u_n + F(u)_n.
\end{align*}
Using $\operatorname{spt} u, \operatorname{spt} F(u) \subseteq \Z_{\geq 0}$, for $n \in \Z$,
\begin{equation*}
   (Au + F(u) + \delta_{-1} x)_n
   = 
   A u_n + F(u)_n + \delta_{-1,n} x
   =
   \begin{cases}
      0 
      & \text{if } n \leq -2,
   \\
      x
      & \text{if } n = -1,
   \\
      Au_n + F(u)_n
      & \text{if } n \geq 0,
   \end{cases}
\end{equation*}
hence $(\tau u)_n = u_{n+1} = (Au + F(u) + \delta_{-1} x)_n$, proving (iii).

(iii) $\Rightarrow$ (i): For $n \in \Z$, $u_{n+1} = (\tau u)_n = (Au + F(u) + \delta_{-1} x)_n = A u_n + F(u)_n + \delta_{-1,n} x$.
Using the fact that $\operatorname{spt} u \subseteq \Z_{\geq 0}$, $\operatorname{spt} F(u) \subseteq \Z_{\geq 0}$, we see that $u_0 = A u_{-1} + F(u)_{-1} + x = x$, implying $(i)$.
\end{proof}

\begin{remark}[Classical initial value problems]
Let $G \colon \R^n \to \R^n$ and
\begin{align*}
   F(u)_n
   \coloneqq
   \begin{cases}
      G(u_n) & \text{if } n \in \Z_{\geq 0},
   \\
      0 & \text{if } n \in \Z_{\leq -1},
   \end{cases}
\end{align*}
for $u \colon \Z \to \R^n$.
Then every solution $u \colon \Z \to \R^n$ of $\tau u = Au + F(u) + \delta_{-1} x$ with $\operatorname{spt} u \subseteq \Z_{\geq 0}$ gives rise to a solution of the initial value problem $u_{n+1} =  G(u_n)$ on $\Z_{\geq 0}$ with initial condition $u_0 = x \in \R^n$ and vice versa.
\end{remark}

\section{The $\cZ$ transform and the operator $(\tau - A)^{-1}$}

Let $A \in L(H)$, $\rho>0$, $F: \ell_{2, \rho}(\Z; H) \rightarrow \ell_{2, \rho}(\Z; H)$. As in Theorem \ref{t:initial_value_problem}, for $x \in H$, consider initial value problems of the form
\begin{equation*}
    \tau u = A u + F(u) + \delta_{-1} x
    \qquad
    (u \in \ell_{2, \rho}(\Z; H)).
\end{equation*}
In this section we reformulate this problem equivalently as a fixed point problem
\begin{equation*}
    u = (\tau - A)^{-1}(F(u) + \delta_{-1} x)
\end{equation*}
by discussing the operator $(\tau - A)^{-1}$ on $\ell_{2, \rho}(\Z; H)$. Our main tool is the $\cZ$ transform.

\begin{lemma}[$L_2$ space on circle and orthonormal basis]
Let $\rho >0$, $S_\rho \coloneqq \setm{z \in \C}{|z| = \rho}$. Define
\begin{equation*}
   L_{2}(S_\rho; H)
   \coloneqq
   \setm{f \colon S_\rho \rightarrow H}{ \int_{S_\rho} | f(z) |_H^2 \frac{\mathrm{d}z}{|z|} < \infty}.
\end{equation*}
Then $L_{2}(S_\rho; H)$ is a Hilbert space with the inner product
\begin{equation*}
   \langle f, g \rangle_{L_{2}(S_\rho; H)}
   \coloneqq
   \frac{1}{2\pi}
   \int_{S_\rho} \big\langle f(z), g(z) \big\rangle_{\!H} \frac{\mathrm{d}z}{|z|}
   \qquad
   (f, g \in L_{2}(S_\rho; H)).
\end{equation*}
Moreover, let $(\psi_n)_{n \in \Z}$ be an orthonormal basis in $H$.
Then $(p_{k,n})_{k, n \in \Z}$ with
\begin{equation*}
   p_{k,n}(z) \coloneqq \rho^{k} z^{-k} \psi_n
   \qquad (z \in S_\rho)
\end{equation*}
is an orthonormal basis in $L_2(S_\rho; H)$.
\end{lemma}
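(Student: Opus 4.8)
The plan is to reduce everything to the classical scalar Fourier theory on the circle via the angular parametrization $z = \rho\, e^{\mathrm{i}\phi}$, $\phi \in [0,2\pi)$. Under this parametrization the arc-length element is $\rho\,\mathrm{d}\phi$ while $|z| = \rho$ on $S_\rho$, so the measure $\frac{\mathrm{d}z}{|z|}$ is just $\mathrm{d}\phi$ and the inner product becomes
\[
   \langle f, g \rangle_{L_2(S_\rho; H)}
   =
   \frac{1}{2\pi} \int_0^{2\pi} \big\langle f(\rho\, e^{\mathrm{i}\phi}), g(\rho\, e^{\mathrm{i}\phi}) \big\rangle_{\!H}\, \mathrm{d}\phi,
\]
which is the inner product of the Bochner space $L_2$ over the normalized probability space $\big([0,2\pi), \tfrac{\mathrm{d}\phi}{2\pi}\big)$ with values in the separable Hilbert space $H$. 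I would record this identification first, since it carries the rest of the argument.

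First I would settle the Hilbert space claim. The integrand $z \mapsto \langle f(z), g(z)\rangle_H$ is integrable by the Cauchy--Schwarz inequality in $H$ applied pointwise together with Cauchy--Schwarz for the scalar $L_2$-norms $|f(\cdot)|_H, |g(\cdot)|_H$, so the inner product is well defined; sesquilinearity (conjugate linear in the first, linear in the second slot) is inherited pointwise from $\langle{}\cdot{},{}\cdot{}\rangle_H$, and positive definiteness holds after the usual identification of functions agreeing $\frac{\mathrm{d}\phi}{2\pi}$-almost everywhere. Completeness is the Riesz--Fischer theorem for Bochner $L_2$-spaces with values in a Hilbert space, which I would simply cite. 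Separability of $H$ guarantees that strong and weak measurability coincide, so no subtlety arises in identifying the space.

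Orthonormality is then a short computation: on $S_\rho$ one has $\rho^{k} z^{-k} = e^{-\mathrm{i}k\phi}$, hence $p_{k,n}(\rho\, e^{\mathrm{i}\phi}) = e^{-\mathrm{i}k\phi}\psi_n$, and therefore
\[
   \langle p_{k,n}, p_{l,m} \rangle_{L_2(S_\rho; H)}
   =
   \langle \psi_n, \psi_m \rangle_{\!H}\,
   \frac{1}{2\pi}\int_0^{2\pi} e^{\mathrm{i}(k-l)\phi}\,\mathrm{d}\phi
   =
   \delta_{k,l}\,\delta_{n,m},
\]
using the orthonormality of $(\psi_n)_{n\in\Z}$ in $H$ and of the trigonometric system $(e^{-\mathrm{i}k\phi})_{k\in\Z}$ in the scalar space.

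The main work, and the step I expect to be the real obstacle, is totality. I would argue via the orthogonal complement: suppose $f \in L_2(S_\rho; H)$ satisfies $\langle p_{k,n}, f\rangle_{L_2(S_\rho; H)} = 0$ for all $k, n \in \Z$. Fixing $n$, the scalar function $g_n(\phi) \coloneqq \langle \psi_n, f(\rho\, e^{\mathrm{i}\phi})\rangle_H$ lies in $L_2\big([0,2\pi), \tfrac{\mathrm{d}\phi}{2\pi}; \C\big)$ and has all its Fourier coefficients equal to zero, so by completeness of the trigonometric system $g_n = 0$ almost everywhere. The delicate point is passing from ``$g_n = 0$ a.e.\ for each fixed $n$'' to ``$f = 0$ a.e.'': here I would use that $\Z$ is countable, so the union over $n$ of the exceptional null sets is again null, whence for almost every $\phi$ one has $f(\rho\, e^{\mathrm{i}\phi}) \perp \psi_n$ for \emph{every} $n$, and totality of $(\psi_n)_{n\in\Z}$ in $H$ forces $f(\rho\, e^{\mathrm{i}\phi}) = 0$. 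Thus $f = 0$ in $L_2(S_\rho; H)$, the orthogonal complement of the system is trivial, and an orthonormal system with trivial orthogonal complement is an orthonormal basis.
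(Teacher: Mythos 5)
Your proposal is correct and follows essentially the same route as the paper: both pass to the angular parametrization $z=\rho e^{\mathrm{i}\phi}$, under which $\tfrac{\mathrm{d}z}{|z|}=\mathrm{d}\phi$ and $p_{k,n}$ becomes $\phi\mapsto e^{-\mathrm{i}k\phi}\psi_n$, reducing the claim to the statement that this tensor system is an orthonormal basis of the Bochner space $L_2((-\pi,\pi);H)$. The only difference is that the paper simply cites this last fact, whereas you prove it (completeness via Riesz--Fischer, totality via the countable-union-of-null-sets argument combined with totality of $(\psi_n)_{n\in\Z}$ in $H$), which is a welcome amount of extra detail but not a different method.
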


\begin{proof}
Let $L_2((-\pi, \pi); H)$ denote the Hilbert space of square integrable functions from $(-\pi, \pi)$ to $H$ with the scalar product
\begin{equation*}
    \left<f, g\right>_{L_2((-\pi, \pi); H)} \coloneqq \frac{1}{2 \pi} \int_{-\pi}^\pi \left<f(z), g(z)\right>_H dz
    \qquad
    (f, g \in L_2((-\pi, \pi); H)).
\end{equation*}
The isometric isomorphism
\begin{equation*}
    \Phi: L_2(S_\rho; H) \rightarrow L_2((-\pi, \pi); H), \quad f \mapsto \big(\phi \mapsto f(\rho e^{i \phi})\big),
\end{equation*}
maps $(p_{k, n})_{k, n \in \Z}$ onto the orthonormal basis $(\phi \mapsto e^{-i k \phi} \psi_n)_{k, n \in \Z}$ of $L_2((-\pi, \pi); H)$, proving that $(\Phi(p_{k, n}))_{k, n \in \Z}$ is an orthonormal basis of $L_2(S_\rho; H)$.
\end{proof}

\pagebreak[4]
\begin{theorem}[$\cZ$ transform]\label{t:ztransform}
Let $\rho >0$.
The operator
\begin{align*}
   \cZ_\rho \colon \ell_{2,\rho}(\Z; H) &\rightarrow L_2(S_\rho; H),
\\
   x & \mapsto \Big( z \mapsto \sum_{k \in \Z} \langle \psi_n, \rho^{-k} x_k\rangle_{\!H} p_{k, n}(z) \Big),
\end{align*}
is well-defined and unitary.
For $x \in \ell_{1, \rho}(\Z; H) \subseteq \ell_{2, \rho}(\Z; H)$ we have
\begin{equation*}
    \cZ_\rho(x) = \left(z \mapsto \sum_{k \in \Z} x_k z^{-k}\right).
\end{equation*}
\end{theorem}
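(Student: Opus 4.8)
The plan is to recognize $\cZ_\rho$ as the canonical unitary that identifies two orthonormal bases. First I would exhibit an orthonormal basis of $\ell_{2,\rho}(\Z; H)$ adapted to the weight: for $k, n \in \Z$ set $e_{k,n} \in \ell_{2,\rho}(\Z; H)$ by $(e_{k,n})_j \coloneqq \delta_{k,j}\,\rho^k \psi_n$. A direct computation with the inner product of Lemma \ref{lem:ell2rho} gives $\langle e_{k,n}, e_{k',n'}\rangle_{\ell_{2,\rho}(\Z;H)} = \delta_{k,k'}\delta_{n,n'}$, and completeness follows since $(\psi_n)_n$ is an orthonormal basis of $H$ and the finitely supported sequences are dense in $\ell_{2,\rho}(\Z;H)$; hence $(e_{k,n})_{k,n\in\Z}$ is an orthonormal basis. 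The crucial observation is that the coefficient appearing in the definition of $\cZ_\rho$ is exactly the Fourier coefficient with respect to this basis, namely $\langle e_{k,n}, x\rangle_{\ell_{2,\rho}(\Z;H)} = \langle \psi_n, \rho^{-k} x_k\rangle_H$, so that (reading the displayed sum as the double sum over $k$ and $n$) $\cZ_\rho$ sends $e_{k,n} \mapsto p_{k,n}$.

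With this identification, well-definedness, isometry, and surjectivity reduce to the standard fact that a map carrying one orthonormal basis bijectively onto another extends to a unitary. Concretely, I would compute
\begin{equation*}
   \sum_{k,n\in\Z} \big| \langle \psi_n, \rho^{-k} x_k\rangle_H \big|^2
   = \sum_{k\in\Z} \rho^{-2k} \sum_{n\in\Z} \big| \langle \psi_n, x_k\rangle_H \big|^2
   = \sum_{k\in\Z} \rho^{-2k} |x_k|_H^2
   = |x|_{\ell_{2,\rho}(\Z;H)}^2 < \infty,
\end{equation*}
using Parseval for $(\psi_n)_n$. Since $(p_{k,n})_{k,n}$ is orthonormal in $L_2(S_\rho; H)$, square-summability of the coefficients guarantees convergence of the defining series, so $\cZ_\rho$ is well-defined, and the same computation shows $|\cZ_\rho x|_{L_2(S_\rho;H)} = |x|_{\ell_{2,\rho}(\Z;H)}$, i.e.\ $\cZ_\rho$ is an isometry. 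Surjectivity is immediate because every $p_{k,n}$ lies in the range; hence the range is a closed (isometric image of a complete space) subspace containing a total set, so $\cZ_\rho$ is unitary.

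For the final formula, fix $x \in \ell_{1,\rho}(\Z; H)$. Since $\sum_{k} |x_k|_H \rho^{-k} = |x|_{\ell_{1,\rho}(\Z;H)} < \infty$ and $|z| = \rho$ on $S_\rho$, the series $g(z) \coloneqq \sum_{k\in\Z} x_k z^{-k}$ converges absolutely and uniformly on $S_\rho$, defining a bounded, hence $L_2(S_\rho; H)$, function. I would then identify $g$ with $\cZ_\rho x$ by matching Fourier coefficients: using $p_{k,n}(z) = e^{-ik\phi}\psi_n$ for $z = \rho e^{i\phi}$, reducing the inner product to the $\phi$-integral via the isometry $\Phi$ of the preceding lemma, and interchanging the uniformly convergent sum with the integral,
\begin{equation*}
   \langle p_{k,n}, g\rangle_{L_2(S_\rho;H)}
   = \sum_{j\in\Z} \rho^{-j}\langle \psi_n, x_j\rangle_H \,\frac{1}{2\pi}\int_{-\pi}^{\pi} e^{i(k-j)\phi}\,\mathrm{d}\phi
   = \langle \psi_n, \rho^{-k} x_k\rangle_H,
\end{equation*}
which coincides with the Fourier coefficients of $\cZ_\rho x$; since $(p_{k,n})_{k,n}$ is an orthonormal basis, $g = \cZ_\rho x$.

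The steps are individually routine; the only points demanding care are the interchange of summation and integration in the last display, which is licensed by the $\ell_{1,\rho}$ hypothesis forcing absolute and uniform convergence on $S_\rho$ (this is precisely why the clean pointwise formula is asserted on $\ell_{1,\rho}$ and not on all of $\ell_{2,\rho}$), and the bookkeeping that the singly-written sum in the definition is really the double sum over $(k,n)$. Everything else is a bounded, diagonal computation once the orthonormal basis $(e_{k,n})_{k,n}$ has been matched with $(p_{k,n})_{k,n}$.
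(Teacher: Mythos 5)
Your proof is correct, and its core -- the Parseval computation showing $\sum_{k,n}|\langle \psi_n,\rho^{-k}x_k\rangle_H|^2 = |x|^2_{\ell_{2,\rho}(\Z;H)}$ and the matching of the orthonormal system $(p_{k,n})$ -- is the same engine the paper uses, though the paper runs the isometry computation by transporting everything to $L_2((-\pi,\pi);H)$ and integrating out the exponentials rather than by naming the orthonormal basis $(e_{k,n})_j=\delta_{k,j}\rho^k\psi_n$ of $\ell_{2,\rho}(\Z;H)$ as you do. The two genuine points of divergence are in the surjectivity and in the $\ell_{1,\rho}$ identity. For surjectivity the paper constructs an explicit preimage of a given $f\in L_2(S_\rho;H)$ via the inversion formula $x_k=\frac{1}{2\pi}\int_{S_\rho}f(z)\rho^{2k}\overline{z}^{\,-k}\,\frac{\mathrm{d}z}{|z|}$ and verifies $\cZ_\rho x=f$ coefficientwise; you instead observe that an isometry of a complete space has closed range and that the range contains the total set $\{p_{k,n}\}$. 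Your route is shorter and avoids having to check that the constructed $x$ lies in $\ell_{2,\rho}(\Z;H)$ (a point the paper in fact glosses over), while the paper's route has the side benefit of exhibiting the inverse transform explicitly. For the formula on $\ell_{1,\rho}(\Z;H)$ the paper simply collapses the double sum $\sum_{k,n}\langle\psi_n,x_kz^{-k}\rangle_H\psi_n=\sum_k x_kz^{-k}$ pointwise, whereas you identify the uniformly convergent series with $\cZ_\rho x$ by matching Fourier coefficients against $(p_{k,n})$; your version is slightly more careful about in what sense the two objects agree as elements of $L_2(S_\rho;H)$. Both arguments are sound; no gaps.
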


\begin{remark}[$\cZ$ transform of $x \in \ell_{2,\rho}(\Z; H) \setminus \ell_{1,\rho}(\Z; H)$]\label{rem:z-transform}
Let $\rho > 0$, $x \in \ell_{2,\rho}(\Z; H) \setminus \ell_{1,\rho}(\Z; H)$. Then
\begin{equation*}
   \sum_{k \in \Z} x_k z^{-k} 
\end{equation*}
does not necessarily converge for all $z \in S_\rho$.
For example if $H = \C$, $x \in \ell_{2, \rho}(\Z; H) \setminus \ell_{1, \rho}(\Z; H)$ with $x_k \coloneqq \frac{\rho^k}{k}$ and $z = \rho$.
\end{remark}

\begin{proof}[Proof of Theorem \ref{t:ztransform}]
Let $\rho >0$, $x \in \ell_{2,\rho}(\Z; H)$.
We prove that $\cZ_\rho$ is well-defined, i.e. that $\cZ_\rho(x)$ exists as an element in $L_2(S_\rho; H)$.
We compute
\begin{align*}
   \big| 
   \sum_{k, n \in \Z} \langle \psi_n, \rho^{-k} x_k\rangle_{\!H} p_{k,n} 
   \big|_{L_2(S_\rho; H)}^2
   &=
   \frac{1}{2\pi}
   \int_{S_\rho} \big| 
   \sum_{k, n \in \Z} \langle \psi_n, \rho^{-k} x_k\rangle_{\!H} p_{k,n}(z)
   \big|_{\!H}^2
    \frac{\mathrm{d}z}{|z|} 
\\
   &=
   \frac{1}{2\pi}
   \int_{S_\rho} \big| 
   \sum_{k, n \in \Z} \langle \psi_n, \rho^{-k} x_k\rangle_{\!H} \rho^k z^{-k} \psi_n
   \big|_{\!H}^2
    \frac{\mathrm{d}z}{|z|} 
\\
   &=
   \frac{1}{2\pi}
   \int_{S_\rho} \big| 
   \sum_{k, n \in \Z} \langle \psi_n, x_k\rangle_{\!H} z^{-k} \psi_n
   \big|_{\!H}^2
    \frac{\mathrm{d}z}{|z|} 
\\
   &=
   \frac{1}{2\pi}
   \int_{-\pi}^{\pi} \big| 
   \sum_{k, n \in \Z} \langle \psi_n, x_k\rangle_{\!H} (\rho e^{i \phi})^{-k} \psi_n
   \big|_{\!H}^2
   \,\mathrm{d}\phi 
\\
   &=
   \frac{1}{2\pi}
   \int_{-\pi}^{\pi} \big| 
   \sum_{k \in \Z} \rho^{-k} e^{-i \phi k} 
   \sum_{n \in \Z}
   \langle \psi_n, x_k\rangle_{\!H} \psi_n
   \big|_{\!H}^2
   \,\mathrm{d}\phi 
\\
   &=
   \frac{1}{2\pi}
   \int_{-\pi}^{\pi} \big| 
   \sum_{k \in \Z} \rho^{-k} e^{-i \phi k} x_k
   \big|_{\!H}^2
   \,\mathrm{d}\phi 
\\
   &=
   \frac{1}{2\pi}
   \int_{-\pi}^{\pi} \big\langle 
   \sum_{k \in \Z} \rho^{-k} e^{-i \phi k} x_k,
   \sum_{\ell \in \Z} \rho^{-\ell} e^{-i \phi \ell} x_\ell
   \big\rangle_{\!H}
   \,\mathrm{d}\phi 
\\
   &=
   \frac{1}{2\pi}
   \int_{-\pi}^{\pi} \sum_{k, \ell \in \Z} \rho^{-k - \ell} e^{-i \phi (\ell  - k)} \big\langle 
   x_k, x_\ell
   \big\rangle_{\!H}
   \,\mathrm{d}\phi 
\\
   &=
   \frac{1}{2\pi}
   \sum_{k, \ell \in \Z} 
   \rho^{-k - \ell} \big\langle x_k, x_\ell \big\rangle_{\!H}
   \int_{-\pi}^{\pi} 
   e^{-i \phi (\ell - k)} 
   \,\mathrm{d}\phi 
\\
   &=
   \sum_{k \in \Z} |x_k|_{H}^2 \rho^{-2 k}
\\
   &=
   |x|_{\ell_{2,\rho}(\Z; H)}^2,
\end{align*}
proving that $\cZ_\rho$ is well-defined and an isometry.

To prove that $\cZ_\rho$ is unitary, it remains to show that $\cZ_\rho$ is surjective. 
Let $f \in L_2(S_\rho; H)$. Define
\begin{equation*}
   x_k
   \coloneqq
   \frac{1}{2\pi}
   \int_{S_\rho}
   f(z) \rho^{2k}\overline{z}^{-k} \,\frac{\mathrm{d}z}{|z|}
   \qquad 
   (k \in \Z).
\end{equation*}
Then $\cZ_\rho x = f$, since for $k, n \in \Z$,
\begin{align*}
   \langle \cZ_\rho x, p_{k,n} \rangle_{L_2(S_\rho; H)}
   &=
   \frac{1}{2\pi}
   \int_{S_\rho} 
   \big\langle \sum_{\ell, m \in \Z} \langle \psi_m, \rho^{-\ell} x_\ell \rangle_{\!H} p_{\ell,m}(z), 
   p_{k,n}(z) \big\rangle_{\!H}
   \frac{\mathrm{d}z}{|z|}
\\
   &=
   \frac{1}{2\pi}
   \int_{S_\rho} 
   \sum_{\ell, m \in \Z} \langle \rho^{-\ell} x_\ell, \psi_m \rangle_{\!H}
   \big\langle p_{\ell,m}(z), p_{k,n}(z) \big\rangle_{\!H}
   \frac{\mathrm{d}z}{|z|}
\\
   &=
   \sum_{\ell, m \in \Z} \langle \rho^{-\ell} x_\ell, \psi_m \rangle_{\!H}
   \frac{1}{2\pi}
   \int_{S_\rho} 
   \big\langle p_{\ell,m}(z), p_{k,n}(z) \big\rangle_{\!H}
   \frac{\mathrm{d}z}{|z|}
\\
   &=
   \langle \rho^{-k} x_k, \psi_n \rangle_{\!H}
\\
   &=
   \Big\langle  
   \frac{1}{2\pi}
   \int_{S_\rho} f(z) \rho^{2k} (\overline{z})^{-k} \frac{\mathrm{d}z}{|z|}, 
   \rho^{-k} \psi_n 
   \Big\rangle_{\!H}
\\
&=
    \frac{1}{2\pi}
   \int_{S_\rho}\langle f(z)   , 
   \rho^{k}{z}^{-k} \psi_n 
   \rangle_{\!H}\frac{\mathrm{d}z}{|z|}
\\
   &= 
   \frac{1}{2\pi}
   \int_{S_\rho} \langle f(z)  , 
    p_{k,n}({z})\rangle_{\!H} \frac{\mathrm{d}z}{|z|}
\\
   &=
   \big\langle 
   f, p_{k,n} \big\rangle_{L_2(S_\rho; H)}.
\end{align*}

For $x \in \ell_{2,\rho}(\Z; H) \cap \ell_{1,\rho}(\Z; H)$ and $z \in S_\rho$ we compute
\begin{align*}
   \sum_{k, n \in \Z} \langle \psi_n, \rho^{-k} x_k \rangle_{\!H} p_{k,n}(z)
   &= 
   \sum_{k, n \in \Z} \langle \psi_n, \rho^{-k} x_k \rangle_{\!H} \rho^k z^{-k} \psi_n 
\\
   &=
   \sum_{k, n \in \Z} \langle \psi_n, x_k z^{-k} \rangle_{\!H} \psi_n 
\\
   &=
   \sum_{k \in \Z} x_k z^{-k}.
   \qedhere
\end{align*}
\end{proof}
\smallskip

\begin{lemma}[Shift is unitarily equivalent to multiplication]\label{lem:mo} Let $\rho > 0$. Then 
\[
   \mathcal{Z}_\rho \tau \mathcal{Z}_\rho^* = \mathrm{m},
\]
where $\mathrm{m}$ is the multiplication-by-the-argument operator acting in $L_2(S_\rho;H)$, i.e.,
\begin{align*}
   \mathrm{m}\colon L_2(S_\rho;H) &\to L_2(S_\rho;H)\\
   f&\mapsto (z\mapsto zf(z)).
\end{align*} 
\end{lemma}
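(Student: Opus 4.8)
The plan is to reduce the operator identity to a pointwise computation on a dense subspace. Since $\cZ_\rho$ is unitary by Theorem \ref{t:ztransform}, we have $\cZ_\rho^* = \cZ_\rho^{-1}$, so the asserted equality $\cZ_\rho \tau \cZ_\rho^* = \mathrm{m}$ is equivalent to
\[
   \cZ_\rho \tau = \mathrm{m}\, \cZ_\rho .
\]
Both sides are bounded linear operators from $\ell_{2,\rho}(\Z;H)$ into $L_2(S_\rho;H)$: the operator $\tau$ is bounded by Lemma \ref{lem:Shift}, $\cZ_\rho$ is unitary, and $\mathrm{m}$ is bounded with $\|\mathrm{m}\| = \rho$, since $|z| = \rho$ on $S_\rho$ gives $|\mathrm{m} f|_{L_2(S_\rho;H)}^2 = \tfrac{1}{2\pi}\int_{S_\rho}|z|^2|f(z)|_H^2\tfrac{\mathrm{d}z}{|z|} = \rho^2 |f|_{L_2(S_\rho;H)}^2$. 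Hence it suffices to verify the identity on a dense subspace.

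First I would take $x \in \ell_{1,\rho}(\Z;H)\cap \ell_{2,\rho}(\Z;H)$, which contains all finitely supported sequences and is therefore dense in $\ell_{2,\rho}(\Z;H)$. On this subspace Theorem \ref{t:ztransform} provides the explicit representation $\cZ_\rho(x) = (z\mapsto \sum_{k\in\Z} x_k z^{-k})$, the series converging absolutely and uniformly on $S_\rho$ because $|x_k z^{-k}| = |x_k|_H \rho^{-k}$ for $z\in S_\rho$. Using $(\tau x)_k = x_{k+1}$, the index shift $m = k+1$ then yields, pointwise for every $z\in S_\rho$,
\[
   \cZ_\rho(\tau x)(z)
   = \sum_{k\in\Z} x_{k+1} z^{-k}
   = z \sum_{m\in\Z} x_m z^{-m}
   = z\,\cZ_\rho(x)(z)
   = \big(\mathrm{m}\,\cZ_\rho(x)\big)(z).
\]
This establishes $\cZ_\rho\tau = \mathrm{m}\cZ_\rho$ on the dense subspace, and continuity of both operators extends it to all of $\ell_{2,\rho}(\Z;H)$.

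I do not anticipate a genuine obstacle here; the only points requiring care are the boundedness of $\mathrm{m}$ (so that the density argument applies) and the restriction to $\ell_{1,\rho}(\Z;H)$, where the closed-form series for $\cZ_\rho$ is available, since on general elements of $\ell_{2,\rho}(\Z;H)$ the series need not converge pointwise (Remark \ref{rem:z-transform}). An alternative, entirely convergence-free route would be to test the identity on the orthonormal basis $(e_{k,n})_{k,n\in\Z}$ of $\ell_{2,\rho}(\Z;H)$ obtained as the preimages $e_{k,n} = \cZ_\rho^* p_{k,n}$, explicitly $(e_{k,n})_j = \rho^{k}\delta_{k,j}\psi_n$. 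One checks directly that $\tau e_{k,n} = \rho\, e_{k-1,n}$, and hence
\[
   \cZ_\rho \tau e_{k,n} = \rho\, p_{k-1,n} = z\, p_{k,n} = \mathrm{m}\, p_{k,n} = \mathrm{m}\,\cZ_\rho e_{k,n},
\]
which again gives the claim by linearity and continuity.
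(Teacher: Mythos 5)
Your proof is correct and follows essentially the same route as the paper: reduce to a dense subspace where the explicit series $\cZ_\rho(x)(z)=\sum_k x_k z^{-k}$ is available (the paper uses compactly supported sequences, you use $\ell_{1,\rho}\cap\ell_{2,\rho}$), perform the index shift, and extend by boundedness and unitarity. Your extra care in justifying the density step and the boundedness of $\mathrm{m}$, as well as the alternative verification on the orthonormal basis $e_{k,n}=\cZ_\rho^* p_{k,n}$, are both sound but not needed beyond what the paper records.
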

\begin{proof}
By the boundedness of the operators involved and the unitarity of $\mathcal{Z}_\rho$, it suffices to prove
 \[
    \mathrm{m} \mathcal{Z}_\rho x = \mathcal{Z}_\rho\tau x
 \]
for $x\in H^{\mathbb{Z}}$ with compact support. For this, we compute
\begin{align*}
  (\mathrm{m} \mathcal{Z}_\rho x)(z)& =z \mathcal{Z}_\rho x (z)
  \\ &=  z \sum_{k \in \Z} x_k z^{-k}
  \\ &=   \sum_{k \in \Z} x_k z^{-k+1}
  \\ &=   \sum_{k \in \Z} x_k z^{-(k-1)}=\sum_{k \in \Z} (\tau x)_k z^{-k}=(\mathcal{Z}_\rho \tau x)(z).
\end{align*}
\end{proof}

Next, we present a Payley--Wiener type result for the $\mathcal{Z}$ transform. 

\begin{lemma}[Characterization of positive support]\label{lem:positive-support}
Let $\rho > 0$, $x \in \ell_{2,\rho}(\Z; H)$. Then the following statements are equivalent:

(i) $\operatorname{spt} x \subseteq \Z_{\geq 0}$,

(ii) $z \mapsto \sum_{k \in \Z} x_k z^{-k}$ is analytic on $\C_{|\cdot| > \rho}$ and
\begin{equation}\label{e:Hardybound}
   \sup_{\mu > \rho} \int_{S_\mu} \big|\sum_{k \in \Z} x_k z^{-k}\big|_H^2 \,\frac{\mathrm{d}z}{|z|} < \infty.
\end{equation}
\end{lemma}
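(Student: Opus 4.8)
The plan is to prove the two implications separately, using the isometry property of the $\cZ$ transform (Theorem~\ref{t:ztransform}) to translate the Hardy-type bound \eqref{e:Hardybound} into weighted $\ell_2$ norms, and using uniqueness of Laurent coefficients to read the sequence $x$ off from the analytic function in the reverse direction.

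For (i) $\Rightarrow$ (ii), I would first note that $\operatorname{spt} x \subseteq \Z_{\geq 0}$ together with $x \in \ell_{2,\rho}(\Z; H)$ places $x$ in $\ell_{2,\rho}(\Z_{\geq 0}; H)$, so Lemma~\ref{l:l1emb}(b) (with $p=1$, $q=2$, $a=0$) yields $x \in \ell_{1,\mu}(\Z_{\geq 0}; H)$ for every $\mu > \rho$. This gives absolute convergence of $\sum_{k \geq 0} |x_k|_H |z|^{-k}$ for $|z| > \rho$, hence local uniform convergence and analyticity of $z \mapsto \sum_k x_k z^{-k}$ on $\C_{|\cdot| > \rho}$. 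Since $x \in \ell_{1,\mu}$, Theorem~\ref{t:ztransform} identifies this series on $S_\mu$ with $\cZ_\mu x$, so the isometry of $\cZ_\mu$ gives $\int_{S_\mu}\big|\sum_k x_k z^{-k}\big|_H^2 \tfrac{\mathrm{d}z}{|z|} = 2\pi\,|x|_{\ell_{2,\mu}(\Z;H)}^2 = 2\pi\sum_{k\geq 0}|x_k|_H^2\mu^{-2k}$. As $k \geq 0$ and $\mu > \rho$, this is bounded by $2\pi\,|x|_{\ell_{2,\rho}(\Z;H)}^2$ uniformly in $\mu$, which is precisely \eqref{e:Hardybound}.

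For (ii) $\Rightarrow$ (i), I would exploit that the convergent series $g(z) \coloneqq \sum_k x_k z^{-k}$ is a Laurent series on the annulus $\rho < |z| < \infty$, so by uniqueness of Laurent coefficients $x_k = \tfrac{1}{2\pi i}\oint_{S_\mu} g(z) z^{k-1}\,\mathrm{d}z$ for every $\mu > \rho$. Parametrizing $z = \mu e^{i\phi}$ turns this into $x_k = \tfrac{\mu^k}{2\pi}\int_{-\pi}^{\pi} g(\mu e^{i\phi})\, e^{ik\phi}\,\mathrm{d}\phi$. Estimating the $H$-norm by the Bochner Cauchy--Schwarz inequality and using $\int_{-\pi}^{\pi}|g(\mu e^{i\phi})|_H^2\,\mathrm{d}\phi = \int_{S_\mu}|g|_H^2\tfrac{\mathrm{d}z}{|z|} \leq M$, where $M$ is the finite supremum in \eqref{e:Hardybound}, gives $|x_k|_H \leq (2\pi)^{-1/2} M^{1/2}\,\mu^k$. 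For $k < 0$ the right-hand side tends to $0$ as $\mu \to \infty$, forcing $x_k = 0$ and hence $\operatorname{spt} x \subseteq \Z_{\geq 0}$.

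The main obstacle is the correct reading and rigorous handling of the symbol $\sum_k x_k z^{-k}$: for $x \in \ell_{2,\rho} \setminus \ell_{1,\rho}$ it need not converge on $S_\rho$ (Remark~\ref{rem:z-transform}), so in (ii) it must be interpreted as the assertion that the series converges on $\C_{|\cdot|>\rho}$ and represents an analytic function there. Granting this, the delicate points are the justification of term-by-term contour integration, i.e.\ uniqueness of Laurent coefficients for the $H$-valued series (which follows from local uniform convergence on the annulus), together with the Bochner Cauchy--Schwarz estimate; the limit $\mu\to\infty$ then carries the argument. In the forward direction the only subtlety is the use of Lemma~\ref{l:l1emb}(b) to pass from square-summability at weight $\rho$ to summability at every strictly larger weight, which is exactly what renders both the series and the bound meaningful.
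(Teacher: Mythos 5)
Your proposal is correct and follows essentially the same route as the paper: in the forward direction you get analyticity from the embedding $\ell_{2,\rho}(\Z_{\geq 0};H)\subseteq\ell_{1,\mu}(\Z_{\geq 0};H)$ (the paper does the same Cauchy--Schwarz estimate inline) and the Hardy bound from monotonicity of $\mu\mapsto|x|_{\ell_{2,\mu}}$ plus unitarity of $\cZ_\mu$, and in the reverse direction you derive the bound $|x_k|_H\leq (2\pi)^{-1/2}M^{1/2}\mu^{k}$ and let $\mu\to\infty$ for $k<0$, exactly as the paper does. The only cosmetic difference is that you extract the coefficient $x_k$ by the Cauchy/Laurent contour integral, whereas the paper pairs $\cZ_\mu x$ with $\cZ_\mu$ of the Kronecker test sequence $e^{(k)}$; both yield the same estimate.
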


\begin{proof}
(i) $\Rightarrow$ (ii): Let $\eps > 0$. Then for $z \in \C$ with $|z| \geq \rho + \eps$ and $k \in \Z_{\geq 0}$, $|z|^{-k} \leq (\rho + \eps)^{-k}$, and
\begin{align*}
   \sum_{k = 0}^\infty |x_k z^{-k}|_H 
   &=
   \sum_{k = 0}^\infty |x_k|_H |z^{-k}| 
\\
   &= 
   \sum_{k = 0}^\infty |x_k|_H \rho^{-k} \rho^k |z^{-k}|
\\
   &\leq
   \sum_{k = 0}^\infty |x_k|_H \rho^{-k} \rho^k (\rho + \eps)^{-k}
\\
   &\leq
   \Big( \sum_{k = 0}^\infty |x_k|_H^2 \rho^{-2k} \Big)^{\frac{1}{2}} 
   \Big( \sum_{k = 0}^\infty \big(\tfrac{\rho}{\rho + \eps}\big)^{2k} \Big)^{\frac{1}{2}}
\\
   &=
   |x|_{\ell_{2,\rho}(\Z; H)} \big( 1 - (\tfrac{\rho}{\rho + \eps})^2 \big)^{-\frac{1}{2}}.
\end{align*}
Hence $z \mapsto \sum_{k \in \Z} x_k z^{-k}$ is analytic on $\C_{|\cdot|>\rho}$.
Moreover, for $\mu > \rho$, 
\begin{align*}
   |x|_{\ell_{2,\mu}(\Z; H)}^2
   &=
   \sum_{k=0}^{\infty} |x_k|_H^2 \mu^{-2k}
\\
   &\leq
   \sum_{k=0}^{\infty} |x_k|_H^2 \rho^{-2k}
\\
   &=
   |x|_{\ell_{2,\rho}(\Z; H)}^2.
\end{align*}
Using the fact that $\cZ_\mu$ is unitary,
\begin{equation*}
   \frac{1}{2 \pi} \int_{S_\mu} \big|\sum_{k \in \Z} x_k z^{-k}\big|_H^2 \,\frac{\mathrm{d}z}{|z|}
   =
   | \cZ_\mu x |_{L_2(S_\mu; H)}^2
   =
   | x |_{\ell_{2,\mu}(\Z; H)}^2
   \leq
   | x |_{\ell_{2,\rho}(\Z; H)}^2,
\end{equation*}
proving \eqref{e:Hardybound}.

(ii) $\Rightarrow$ (i): Let $k \in \Z_{< 0}$. Define $e^{(k)} \in \C^{\Z}$ by $e^{(k)}_\ell \coloneqq \delta_{k,\ell}$ for $\ell \in \Z$, then $e^{(k)} \in \ell_{2}(\Z;\C)$. Let $n \in \Z$. Then
\begin{align*}
   |\langle x_k, \psi_n \rangle_{\!H}|
   &=
   \big| \sum_{\ell \in \Z}
   \langle x_\ell, \psi_n \rangle_{\!H} e^{(k)}_\ell \big|
\\
   &=
   \big| \sum_{\ell \in \Z}
   \langle x_\ell, \mu^{2 \ell} \psi_n e^{(k)}_\ell \rangle_{\!H} \mu^{- 2 \ell} \big|
\\
   &=
   |\langle x, \mu^{2 \cdot} \psi_n e^{(k)}_{\cdot} \rangle_{\ell_{2, \mu}(\Z; H)}|
\\
   &=
   |\langle \cZ_\mu x, \cZ_\mu \mu^{2 \cdot} \psi_n e^{(k)}_{\cdot} \rangle_{L_2(S_\mu; H)}|
\\
   &\leq
   | \cZ_\mu x |_{L_2(S_\mu; H)} \,
   | \cZ_\mu \mu^{2 \cdot} \psi_n e^{(k)}_{\cdot} |_{L_2(S_\mu; H)}
\\
   &\leq
   | \cZ_\mu x |_{L_2(S_\mu; H)} \,
   \mu^{k} |\psi_n|.
\end{align*}

Since $(\cZ_\mu \mu^{2 \cdot} \psi_n e^{(k)}_{\cdot})(z)
=
\sum_{\ell \in \Z} \mu^{2 \ell} \psi_n e^{(k)}_\ell z^{-\ell}
=
\mu^{2 k} \psi_n z^{-k}$ for $z \in S_\mu$,
\begin{align*}
   |\langle x_k, \psi_n \rangle_{\!H}|
   &\leq
   | \cZ_\mu x |_{L_2(S_\mu; H)} \,
   \mu^{k} |\psi_n|.
\end{align*}
By \eqref{e:Hardybound}, that is, $\sup_{\mu>\rho}|\mathcal{Z}_\mu x|_{L_2(S_\mu;H)}<\infty$, and since $k<0$, the right-hand side tends to $0$ as $\mu \to \infty$, proving that $x_k = 0$.
\end{proof}

\begin{definition}[Causal linear operator]\label{d:linearcausal}
    We call a linear operator $B  \colon \ell_{2,\rho}(\Z; H) \rightarrow \ell_{2,\rho}(\Z; H)$ \emph{causal}, if for all $a\in \Z$, $f\in \ell_{2,\rho}(\Z;H)$, we have
    \begin{equation*}
        \operatorname{spt} f \subseteq \mathbb{Z}_{\geq a} 
        \quad\Rightarrow\quad
        \operatorname{spt} Bf \subseteq\mathbb{Z}_{\geq a}.
    \end{equation*}
\end{definition}

Note, that if an operator $B  \colon \ell_{2,\rho}(\Z; H) \rightarrow \ell_{2,\rho}(\Z; H)$ commutes with the shift operator $\tau$, the operator $B$ is causal, if and only if for all $f \in \ell_{2, \rho}(\Z; H)$ we have
\begin{equation*}
    \operatorname{spt} f \subseteq \mathbb{Z}_{\geq 0}
    \quad\Rightarrow\quad
    \operatorname{spt} Bf \subseteq \mathbb{Z}_{\geq 0}.
\end{equation*}
The next major result is Theorem \ref{t:causal} which characterizes the causality of the operator $(\tau - A)^{-1}$ on $\ell_{2, \rho}(\Z; H)$ by the spectral radius of $A$.
Recall \cite[VIII.3.6, p.\ 222]{Katznelson2004} that for $A \in L(H)$ with spectrum $\sigma(A)$, the spectral radius
\begin{equation*}
   r(A)
   \coloneqq
   \sup \setm{|z|}{z \in \sigma(A)}
\end{equation*}
of $A$ satisfies
\begin{equation*}
   r(A)
   =
   \lim_{n \to \infty} |A^n|_{L(H)}^{1/n}.
\end{equation*}
Let $A\in L(H)$ and $\rho > 0$.
We denote the operators $\ell_{2,\rho}(\Z, H)\rightarrow \ell_{2,\rho}(\Z, H)$, $x\mapsto (Ax_k)_k$,
        and $L_2(S_\rho, H)\rightarrow L_2(S_\rho, H)$, $f\mapsto (z \mapsto Af(z))$,
        which have the same operator norm as $A$, again by $A$.

\begin{theorem}[Characterization of causality of $(\tau - A)^{-1}$ by spectral radius]\label{t:causal}
Let $\rho > 0$, $A \in L(H)$. Then the following statements are equivalent:

(i) $(\tau - A)^{-1} \in L(\ell_{2,\rho}(\Z; H))$ is causal,

(ii) $(\tau - A)^{-1} \in L(\ell_{2,\rho}(\Z; H))$ and $\operatorname{spt} (\tau - A)^{-1} \delta_{-1} x \subseteq \mathbb{Z}_{\geq 0}$ for all $x\in H$,

(iii) $\rho > r(A)$.
\end{theorem}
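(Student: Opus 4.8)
The plan is to transport the whole problem to the $\cZ$-transform side, where $\tau - A$ becomes the multiplication operator $M_{z-A}\colon f \mapsto (z \mapsto (z-A)f(z))$ on $L_2(S_\rho; H)$. Indeed, Lemma \ref{lem:mo} gives $\cZ_\rho \tau \cZ_\rho^* = \mathrm{m}$, and since the pointwise operator $A$ is intertwined by $\cZ_\rho$ with multiplication by $A$ on $L_2(S_\rho; H)$, we obtain $\cZ_\rho (\tau - A) \cZ_\rho^* = M_{z-A}$. Consequently $(\tau - A)^{-1} \in L(\ell_{2,\rho}(\Z; H))$ if and only if $M_{z-A}$ is boundedly invertible, and, because $z \mapsto z - A$ is continuous on the compact set $S_\rho$ while $\|(z-A)^{-1}\|_{L(H)}$ blows up as $z$ approaches $\sigma(A)$, this happens precisely when $S_\rho \cap \sigma(A) = \emptyset$; in that case $(\tau - A)^{-1} = \cZ_\rho^* M_{(z-A)^{-1}} \cZ_\rho$. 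I would record this equivalence first, since it feeds into two of the three implications.

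I would then prove the cycle (i) $\Rightarrow$ (ii) $\Rightarrow$ (iii) $\Rightarrow$ (i). For (i) $\Rightarrow$ (ii): as $\operatorname{spt} \delta_{-1} x = \{-1\} \subseteq \Z_{\geq -1}$, causality gives $\operatorname{spt} u \subseteq \Z_{\geq -1}$ for $u \coloneqq (\tau - A)^{-1}\delta_{-1}x$; evaluating $\tau u - Au = \delta_{-1}x$ at $n = -2$ yields $u_{-1} = A u_{-2} = 0$, so in fact $\operatorname{spt} u \subseteq \Z_{\geq 0}$.

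For (ii) $\Rightarrow$ (iii): the same identity together with $\operatorname{spt} u \subseteq \Z_{\geq 0}$ forces $u_0 = x$ and $u_{n+1} = A u_n$, i.e.\ $u_n = A^n x$ for $n \geq 0$; membership $u \in \ell_{2,\rho}(\Z; H)$ then reads $\sum_{n \geq 0} |A^n x|_H^2 \rho^{-2n} < \infty$ for every $x \in H$, whence $\sup_n |A^n x|_H \rho^{-n} < \infty$ pointwise and, by the uniform boundedness principle applied to the family $(A^n \rho^{-n})_n$, $\sup_n \|A^n\|_{L(H)} \rho^{-n} < \infty$; this gives $r(A) = \lim_n \|A^n\|_{L(H)}^{1/n} \leq \rho$. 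Since (ii) also asserts $(\tau - A)^{-1} \in L(\ell_{2,\rho}(\Z; H))$, the equivalence from the first paragraph yields $S_\rho \cap \sigma(A) = \emptyset$, which rules out a spectral value of modulus exactly $\rho$ and upgrades the inequality to $\rho > r(A)$. For (iii) $\Rightarrow$ (i) I would invoke the Paley--Wiener characterization of Lemma \ref{lem:positive-support}: if $\rho > r(A)$ then $\sigma(A) \subseteq \C_{|\cdot| < \rho}$, so $S_\rho \cap \sigma(A) = \emptyset$ (hence $(\tau - A)^{-1}$ is bounded) and $z \mapsto (z-A)^{-1}$ is analytic on $\C_{|\cdot| > \rho}$, continuous there, and vanishing at infinity, so $M \coloneqq \sup_{|z| \geq \rho} \|(z-A)^{-1}\|_{L(H)} < \infty$. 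Because $(\tau - A)^{-1}$ commutes with $\tau$, the remark after Definition \ref{d:linearcausal} reduces causality to the implication $\operatorname{spt} f \subseteq \Z_{\geq 0} \Rightarrow \operatorname{spt} (\tau - A)^{-1} f \subseteq \Z_{\geq 0}$. For such $f$, Lemma \ref{lem:positive-support} provides an analytic extension of $\cZ_\rho f$ to $\C_{|\cdot| > \rho}$ obeying the Hardy bound \eqref{e:Hardybound}; the $\cZ$-transform of $(\tau - A)^{-1} f$ equals $(z-A)^{-1} \cZ_\rho f$, extends analytically to $\C_{|\cdot| > \rho}$, and inherits the bound via $\int_{S_\mu} |(z-A)^{-1}\cZ_\rho f|_H^2 \tfrac{\mathrm{d}z}{|z|} \leq M^2 \int_{S_\mu} |\cZ_\rho f|_H^2 \tfrac{\mathrm{d}z}{|z|}$, so the converse direction of Lemma \ref{lem:positive-support} delivers $\operatorname{spt} (\tau - A)^{-1} f \subseteq \Z_{\geq 0}$.

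I expect the main obstacle to be the first paragraph's equivalence, namely making precise why bounded invertibility of $M_{z-A}$ fails exactly when $\sigma(A)$ meets $S_\rho$: the operator-valued and possibly non-normal nature of $A$ prevents a naive diagonalization, and some care is needed near points of the residual spectrum to argue that $(z-A)^{-1}$ cannot be essentially bounded on $S_\rho$ there. The remaining implications are then largely bookkeeping with the $\cZ$-transform dictionary and Lemma \ref{lem:positive-support}.
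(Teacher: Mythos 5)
Your proposal is correct and follows essentially the same route as the paper: the same cycle (i)\,$\Rightarrow$\,(ii)\,$\Rightarrow$\,(iii)\,$\Rightarrow$\,(i), the same evaluation at $n=-2$ for (i)\,$\Rightarrow$\,(ii), the reduction of (iii)\,$\Rightarrow$\,(i) to Lemma~\ref{lem:positive-support} via the uniform bound on $(z-A)^{-1}$ for $|z|>\rho$, and the same use of Lemma~\ref{lem:mo} to pass to the multiplication operator $\mathrm{m}-A$ (whose bounded invertibility forcing $S_\rho\cap\sigma(A)=\emptyset$ is asserted with comparable brevity in the paper itself). The only difference is cosmetic: in (ii)\,$\Rightarrow$\,(iii) you apply Banach--Steinhaus directly to get $\sup_n\|A^n\|\rho^{-n}<\infty$ and hence $r(A)\leq\rho$, whereas the paper assumes $\rho<r(A)$ and derives a contradiction from $u\notin\ell_{2,\rho}(\Z;H)$.
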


\begin{proof}
(i) $\Rightarrow$ (ii):
By assumption $(\tau - A)^{-1} \in L(\ell_{2,\rho}(\Z; H))$.
Let $u \coloneqq (\tau - A)^{-1} \delta_{-1} x$.
By causality of $(\tau - A)^{-1}$, it follows that $\operatorname{spt} u \subseteq \Z_{\geq -1}$.
Using the fact that $\tau u = A u + \delta_{-1} x$, we get
\begin{equation*}
    u_{-1} = A u_{-2} + \delta_{-1, -2} x = 0,
\end{equation*}
proving $\operatorname{spt} u \subseteq \Z_{\geq 0}$.

(ii) $\Rightarrow$ (iii):
We have $(\tau - A)^{-1} \in L(\ell_{2,\rho}(\Z; H))$.
Hence, by Lemma \ref{lem:mo}, $(\mathrm{m} - A)^{-1} \in L(L_2(S_\rho; H))$.
By the definition of $\mathrm{m}$, this implies that $S_\rho$ along with a neighborhood of $S_\rho$ is contained in the resolvent set of $A \in L(H)$.
Therefore the spectral radius cannot be equal to $\rho$.
Assume $\rho < r(A)$.
By the formula for the spectral radius, there exists $\eps > 0$, $n_0 \in \N$ with $(\rho + \eps)^n \leq |A^n|_{L(H)}$ for all $n\geq n_0$.
Hence,
\begin{equation*}
   \Big|
   \frac{A^n}{\rho^n}
   \Big|_{L(H)}
   \to \infty
   \qquad (n \to \infty).
\end{equation*}
By the uniform boundedness principle there exists $x \in H$ with 
\begin{equation*}
   \frac{|A^n x|_H}{\rho^n |x|_H}
   \to \infty
   \qquad (n \to \infty),
\end{equation*}
i.e., there exists $n_1 \in \N$ such that $|A^n x|_H \geq \rho^n |x|_H$ for all $n \geq n_1$.
By hypothesis we obtain $u \coloneqq (\tau - A)^{-1} \delta_{-1} x \in \ell_{2, \rho}(\Z; H)$ and 
\begin{equation*}
      \operatorname{spt} u \subseteq \Z_{\geq 0}.
\end{equation*}
Since $\tau u = A u + \delta_{-1} x$, it follows that
\begin{equation*}
   u_n =
   \begin{cases}
      0 & \text{if } n \leq -1,
   \\
      A^n x & \text{if } n \geq 0.
   \end{cases}
\end{equation*}
Consequently, we obtain
\begin{align*}
   |u|_{\ell_{2, \rho}(\Z; H)}^2
   &=
   \sum_{k \in \Z} |u_k|_H^2 \rho^{-2 k}
\\
   &=
   \sum_{k = 0}^\infty |A^k x|_H^2 \rho^{-2 k}
\\
   &\geq
   \sum_{k = n_1}^\infty |x|_H \rho^{2 k} \rho^{-2 k}
   = \infty,
\end{align*}
which is a contradiction. Thus,  $\rho > r(A)$.

(iii) $\Rightarrow$ (i):
Let $\rho > r(A)$. Then $(\mathrm{m} - A)^{-1} \in L(L_2(S_\rho; H))$ and hence $(\tau - A)^{-1} \in L(\ell_{2,\rho}(\Z; H))$. To show (i), let $f \in \ell_{2, \rho}(\Z; H)$, $\operatorname{spt}f \subseteq \Z_{\geq 0}$, and prove that $u \coloneqq (\tau - A)^{-1}f$ satisfies $\operatorname{spt}u \subseteq \Z_{\geq 0}$.

By Lemma \ref{lem:positive-support}, $z \mapsto \sum_{k \in \Z} f_k z^{-k}$ is analytic on $\C_{|\cdot|>\rho}$. Hence, also
\begin{equation*}
   z \mapsto \sum_{k \in \Z} u_k z^{-k}
   =
   (z - A)^{-1} \sum_{k \in \Z} f_k z^{-k}
   \qquad 
   \text{is analytic on } \C_{|\cdot|>\rho}.
\end{equation*}
Next, by Lemma \ref{lem:positive-support},
$\sup_{\mu > \rho} \int_{S_\mu} \big|\sum_{k \in \Z} f_k z^{-k}\big|_H^2 \,\frac{\mathrm{d}z}{|z|} < \infty$. Moreover, we have    
\[  \sup_{z \in \C_{|\cdot|>\rho}} \big|(z - A)^{-1} \big|_{L(H)}<\infty.\] Indeed, this follows from $(z - A)^{-1} = \frac{1}{z} (1 - z^{-1} A)^{-1} = \frac{1}{z} \sum_{k=0}^\infty (\frac{1}{z} A)^k$ for $z\in \C_{|\cdot|>\rho}$ and $\rho>r(A)$. For $\mu > \rho$
\begin{align*}
   \int_{S_\mu} \big|\sum_{k \in \Z} u_k z^{-k}\big|_H^2 \,\frac{\mathrm{d}z}{|z|}
   &=
   \int_{S_\mu} \big|(z - A)^{-1} \sum_{k \in \Z} f_k z^{-k}\big|_H^2 \,\frac{\mathrm{d}z}{|z|}
\\
   &\leq
   \sup_{z \in \C_{|\cdot|>\rho}} \big|(z - A)^{-1} \big|_{L(H)}^2
   \int_{S_\mu} \big| \sum_{k \in \Z} f_k z^{-k}\big|_H^2 \,\frac{\mathrm{d}z}{|z|},
\end{align*}
hence $\sup_{\mu > \rho} \int_{S_\mu} \big|\sum_{k \in \Z} u_k z^{-k}\big|_H^2 \,\frac{\mathrm{d}z}{|z|} < \infty$. By Lemma \ref{lem:positive-support}, $\operatorname{spt}u \subseteq \Z_{\geq 0}$.
\end{proof}

Next, we address linear initial value problems. In particular, we show that for $x \in H$ the problem
\begin{equation*}
    \tau u = A u + \delta_{-1} x
\end{equation*}
has exactly one solution $u \in H^\Z$ satisfying $\operatorname{spt} u \subseteq \Z_{\geq 0}$
    and this solution $u$ solves the initial value problem
\begin{equation*}
    u_{n + 1} = A u_n, \quad u_0 = x
    \qquad
    (n \in \Z_{\geq 0}).
\end{equation*}

\begin{proposition}[Linear initial value problem]\label{p:linear-initial-value-problem}
Let $A \in L(H)$, $x \in H$ and $u \in H^\mathbb{Z}$.
Then the following statements are equivalent:
\begin{enumerate}
    \item[(i)] $\tau u = A u  + \delta_{-1}x \text{ and } \operatorname{spt}(u) \subseteq \Z_{\geq 0}$.

    \item[(ii)] $u_{n+1} = A u_n\text{ on }\mathbb{Z}_{>0} \text{ and }u_0=x 
       \text{ and }u_n =0 \text{ on }\mathbb{Z}_{< 0}$.

    \item[(iii)] $u_n =
        \begin{cases}
        0 & \text{if } n < 0,
        \\
        A^{n} x & \text{if } n \geq 0.
        \end{cases}$
\end{enumerate}
Moreover, if $\rho > r(A)$ then $u$ satisfying $(i), (ii), (iii)$ is unique in $\ell_{2, \rho}(\Z; H)$.
I.e., if $\rho > r(A)$ then $(i), (ii), (iii)$ are equivalent to $(iv)$ and $(v)$.
\begin{enumerate}
    \item[(iv)] $\tau u = A u  + \delta_{-1}x \text{ and } u \in \ell_{2, \rho}(\Z; H)$.

    \item[(v)] $u = (\tau - A)^{-1} \delta_{-1} x \in \ell_{2, \rho}(\Z; H) $.
\end{enumerate}
\end{proposition}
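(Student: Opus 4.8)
The plan is to prove the unconditional equivalence (i) $\Leftrightarrow$ (ii) $\Leftrightarrow$ (iii) purely by reading the operator equation off componentwise, and only then to invoke the standing hypothesis $\rho > r(A)$ to link these to (iv) and (v). Componentwise, $\tau u = Au + \delta_{-1}x$ reads $u_{n+1} = Au_n + \delta_{-1,n}x$ for every $n \in \Z$, which decouples into the recursion $u_{n+1} = Au_n$ for $n \neq -1$ together with the single matching condition $u_0 = Au_{-1} + x$ at $n = -1$.

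For (i) $\Rightarrow$ (ii), the support hypothesis forces $u_n = 0$ for $n < 0$, in particular $u_{-1} = 0$, so the matching condition collapses to $u_0 = x$, while the recursion restricted to $n \geq 0$ is exactly $u_{n+1} = Au_n$ on the positive indices. Conversely, for (ii) $\Rightarrow$ (i) I would verify the componentwise identity in the three regimes $n \geq 0$, $n = -1$, and $n \leq -2$ separately; the only non-immediate checks are that $u_0 = x$ with $u_{-1} = 0$ makes the $n=-1$ equation hold and that both sides vanish for $n \leq -2$. The step (ii) $\Leftrightarrow$ (iii) is then a one-line induction: $u_n = A^n x$ solves the forward recursion with $u_0 = x$, and conversely the recursion determines $u$ uniquely from $u_0$.

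For the \emph{moreover} part I would use $\rho > r(A)$ through two previously established facts. First, the spectral radius formula $r(A) = \lim_{n} |A^n|_{L(H)}^{1/n}$ lets me pick $\sigma$ with $r(A) < \sigma < \rho$ and a constant $C$ with $|A^n|_{L(H)} \leq C\sigma^n$; then the explicit sequence from (iii) obeys $\sum_{n \geq 0} |A^n x|_H^2 \rho^{-2n} \leq C^2 |x|_H^2 \sum_{n \geq 0} (\sigma/\rho)^{2n} < \infty$, so $u \in \ell_{2,\rho}(\Z; H)$. This membership is the step that genuinely needs $\rho > r(A)$, and I expect it to be the main (though still short) obstacle. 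Second, Theorem \ref{t:causal} supplies $(\tau - A)^{-1} \in L(\ell_{2,\rho}(\Z; H))$ and $\operatorname{spt} (\tau - A)^{-1}\delta_{-1}x \subseteq \Z_{\geq 0}$ whenever $\rho > r(A)$.

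With these in hand the remaining implications are algebraic rearrangements. Since $\delta_{-1}x \in \ell_{2,\rho}(\Z; H)$ and the identity $(\tau - A)u = \delta_{-1}x$ is literally the equation in (iv), applying the bounded inverse yields (iv) $\Leftrightarrow$ (v). To close the cycle I would obtain (iii) $\Rightarrow$ (iv) from the membership computation combined with the fact (already contained in (i)) that the explicit sequence solves $\tau u = Au + \delta_{-1}x$, and (v) $\Rightarrow$ (i) from the support statement of Theorem \ref{t:causal}. Finally, uniqueness in $\ell_{2,\rho}(\Z; H)$ is immediate: if $u$ and $v$ both satisfy (iv) then $(\tau - A)(u - v) = 0$, and the injectivity of $\tau - A$ guaranteed by $(\tau - A)^{-1} \in L(\ell_{2,\rho}(\Z; H))$ forces $u = v$.
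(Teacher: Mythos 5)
Your proof is correct and takes essentially the same route as the paper's, whose own proof is only a two-line sketch citing Theorem \ref{t:initial_value_problem} for the equivalence of (i)--(iii) and Theorem \ref{t:causal} for (iv) and (v). The only cosmetic difference is that you verify $u \in \ell_{2,\rho}(\Z;H)$ directly via the spectral radius formula, whereas the paper obtains this membership for free from the boundedness and causality of $(\tau-A)^{-1}$ combined with the equivalence (i) $\Leftrightarrow$ (iii).
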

\begin{proof}
The equivalence of $(i), (ii)$ and $(iii)$ are reformulations in the spirit of Theorem \ref{t:initial_value_problem}.
By Theorem \ref{t:causal}, the operator $(\tau-A)^{-1}\in L(\ell_{2,\rho}(\mathbb{R};H))$ is well-defined and causal.
This shows $(iv) \Leftrightarrow (v)$ and $(v) \Leftrightarrow (i)$.
\end{proof}

We present a characterization of exponential stability next.

\begin{proposition}[Characterization of exponential stability of linear equations]\label{t:expstab}
Let $A \in L(H)$.
Then the following statements are equivalent:
\begin{enumerate}
    \item[(i)] $r(A) < 1$.
    
    \item[(ii)] There exists $\rho \in (0, 1)$ such that for all $x \in H$ the unique solution $u \in H^\Z$ of
    \begin{equation*}
        \tau u = A u + \delta_{-1} x
    \end{equation*}
    with $\operatorname{spt} u \subseteq \Z_{\geq 0}$ (c.f. Proposition \ref{p:linear-initial-value-problem} $(i) \Leftrightarrow (iii)$) satisfies $u \in \ell_{2, \rho}(\Z; H)$.
    
    \item[(iii)] There exists $\rho \in (0, 1)$ such that for all $x \in H$ for the unique solution $u \in H^\Z$ of
    \begin{equation*}
        \tau u = A u + \delta_{-1} x
    \end{equation*}
    with $\operatorname{spt} u \subseteq \Z_{\geq 0}$  there is $M > 0$ with
    \begin{equation*}
        |u_n|_H \leq M \rho^n
        \qquad
        (n \in \Z),
    \end{equation*}
i.e., $u \in \ell_{\infty, \rho}(\Z; H)$.
\end{enumerate}
\end{proposition}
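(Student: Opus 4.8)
The plan is to reduce the three conditions to a statement about the orbit $(A^n x)_{n\ge 0}$ and then prove the cycle $(i)\Rightarrow(iii)\Rightarrow(ii)\Rightarrow(i)$. By Proposition \ref{p:linear-initial-value-problem} $(i)\Leftrightarrow(iii)$, for every $x\in H$ the unique $u\in H^\Z$ with $\tau u=Au+\delta_{-1}x$ and $\operatorname{spt}u\subseteq\Z_{\ge 0}$ is given explicitly by
\[
   u_n=\begin{cases}0 & n<0,\\ A^n x & n\ge 0.\end{cases}
\]
Consequently both $(ii)$ and $(iii)$ are statements about the decay of $(A^n x)_{n\ge 0}$, and the entire proposition amounts to relating this decay to the spectral radius $r(A)=\lim_{n\to\infty}|A^n|_{L(H)}^{1/n}$.

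For $(i)\Rightarrow(iii)$ I would fix any $\rho\in(r(A),1)$ and note that, since $|A^n|_{L(H)}^{1/n}\to r(A)<\rho$, the sequence $|A^n|_{L(H)}\rho^{-n}$ tends to $0$ and is therefore bounded, say by $C\coloneqq\sup_{n\ge 0}|A^n|_{L(H)}\rho^{-n}<\infty$. Then $|u_n|_H\le|A^n|_{L(H)}|x|_H\le C|x|_H\rho^n$ for $n\ge 0$, and trivially $|u_n|_H=0\le C|x|_H\rho^n$ for $n<0$, so $(iii)$ holds with $M\coloneqq C|x|_H$. For $(iii)\Rightarrow(ii)$ I would invoke the scale of spaces: $(iii)$ says precisely that $u\in\ell_{\infty,\rho}(\Z_{\ge 0};H)$ for some $\rho\in(0,1)$, and choosing $\eps>0$ with $\rho+\eps<1$, Lemma \ref{l:l1emb}(b) (applied with $q=\infty$, $p=2$, $a=0$) gives $u\in\ell_{2,\rho+\eps}(\Z_{\ge 0};H)\subseteq\ell_{2,\rho+\eps}(\Z;H)$, which is $(ii)$ for the weight $\rho+\eps\in(0,1)$.

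The implication $(ii)\Rightarrow(i)$ is the heart of the matter. From $(ii)$ we have, for every $x\in H$, that $\sum_{n=0}^\infty|A^n x|_H^2\rho^{-2n}<\infty$; in particular each summand is dominated by the sum, so $\sup_{n\ge 0}|A^n x|_H\rho^{-n}<\infty$ for every $x$. Thus the family $(\rho^{-n}A^n)_{n\ge 0}\subseteq L(H)$ is pointwise bounded, and the uniform boundedness principle (equivalently, the closed graph theorem applied to the well-defined linear map $x\mapsto(A^n x)_{n\ge 0}\in\ell_{2,\rho}(\Z_{\ge 0};H)$) yields a constant $C>0$ with $|A^n|_{L(H)}\le C\rho^n$ for all $n\ge 0$. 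The spectral radius formula then closes the loop:
\[
   r(A)=\lim_{n\to\infty}|A^n|_{L(H)}^{1/n}\le\lim_{n\to\infty}(C\rho^n)^{1/n}=\rho<1,
\]
which is $(i)$.

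I expect the only genuine obstacle to lie in $(ii)\Rightarrow(i)$: the hypothesis is purely pointwise (summability of the orbit for each fixed $x$), whereas the desired conclusion is an operator-norm estimate, so the passage from the scalar bounds to the uniform bound $|A^n|_{L(H)}\le C\rho^n$ is exactly where a Banach–Steinhaus-type argument is indispensable. Once that uniform estimate is available, the remaining implications are routine bookkeeping with the spectral radius formula and with the embedding in Lemma \ref{l:l1emb}.
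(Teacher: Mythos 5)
Your proposal is correct, but it closes the loop differently from the paper, and the difference is worth noting. The paper runs the cycle $(i)\Rightarrow(ii)\Rightarrow(iii)\Rightarrow(i)$: for $(i)\Rightarrow(ii)$ it invokes Theorem \ref{t:causal} to get $u=(\tau-A)^{-1}\delta_{-1}x\in\ell_{2,\rho}(\Z;H)$ directly from the causality machinery, and for the crucial return implication $(iii)\Rightarrow(i)$ it first passes to $\ell_{1,\tilde\rho}$ via Lemma \ref{l:l1emb}, applies the closed graph theorem to $x\mapsto(A^nx)_{n\ge0}$, selects near-norming vectors $x_n$ with $|A^nx_n|_H\ge\frac12|A^n|_{L(H)}$, uses monotone convergence to conclude $\sum_n|(A/\tilde\rho)^n|_{L(H)}<\infty$, and then builds $(z-A)^{-1}$ for all $z\in S_{\tilde\rho}$ by Neumann series to locate the spectrum. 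You instead run $(i)\Rightarrow(iii)\Rightarrow(ii)\Rightarrow(i)$ and, for the return implication, apply Banach--Steinhaus to the pointwise bounded family $(\rho^{-n}A^n)_{n\ge0}$ to get $|A^n|_{L(H)}\le C\rho^n$ and then finish with the Gelfand formula $r(A)=\lim_n|A^n|_{L(H)}^{1/n}\le\rho$. Both arguments are sound; yours is shorter and avoids the $\ell_1$ summability, the selection of the $x_n$, and the resolvent construction entirely, at the cost of leaning on the spectral radius formula in the reverse direction (which the paper has already recalled, so this is no real cost). Your forward implication $(i)\Rightarrow(iii)$ via boundedness of $|A^n|_{L(H)}\rho^{-n}$ and your use of Lemma \ref{l:l1emb}(b) with $q=\infty$, $p=2$ for $(iii)\Rightarrow(ii)$ are both fine; the only cosmetic caveat is that the constant $M=C|x|_H$ in $(iii)$ depends on $x$, which the statement permits.
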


\begin{proof}
$(i) \Rightarrow (ii)$:
From Theorem \ref{t:causal} we deduce that for every $r \in (r(A), 1) \neq \emptyset$ we have $u = (\tau - A)^{-1} \delta_{-1} x \in \ell_{2, \rho}(\Z; H)$.

$(ii) \Rightarrow (iii)$:
This follows as $\ell_{2, \rho}(\Z_{\geq 0}; H) \subseteq \ell_{\infty, \rho}(\Z_{\geq 0}; H)$ for all $\rho > 0$.

$(iii) \Rightarrow (i)$:
Let $\rho \in (0, 1)$, $x \in H$ and $u \in \ell_{\infty, \rho}(\Z; H)$ as in $(iii)$.
Let $\tilde{\rho} > \rho$.
Applying Lemma \ref{l:l1emb}(ii) we obtain $u \in \ell_{1, \tilde{\rho}}(\Z; H)$.   
Also $u_n = A^n x$ for $n \in \Z_{\geq 0}$ by Proposition \ref{p:linear-initial-value-problem}.
We observe that the closed graph theorem implies that the mapping
\begin{equation*}
    H\ni x\mapsto \big(A^n x \big)_{n \in \Z_{\geq 0}} \in \ell_{1,\tilde{\rho}}(\Z_{\geq 0};H)
\end{equation*}
is continuous.
We have
\begin{equation*}
	\sup_{x\in H,|x|_H\leq 1} \sum_{n=0}^\infty |A^n x|_H \tilde{\rho}^{-n} < \infty.
\end{equation*}
For $n\in \Z_{\geq 0}$ we find $x_n\in H$, $|x_n|_H=1$ such that $|A^{n}x_n|_H\geq \frac{1}{2} |A^{n}|_{L(H)}$.
Thus, by the monotone convergence theorem, we deduce
 \begin{align*}
    \sum_{n=0}^\infty |(A/\tilde{\rho})^{n}|_{L(H)} &\leq 2\sum_{n=0}^\infty \sup_{k\in\mathbb{N}} |A^{n}x_k|_H  \tilde{\rho}^{-n}
    \\ & = 2 \sup_{k\in\mathbb{N}} \sum_{n=0}^\infty  |A^{n}x_k|_H \tilde{\rho}^{-n}
    \\ & \leq 2\sup_{x\in H,|x|_H\leq 1} \sum_{n=0}^\infty |A^{n}x|_H \tilde{\rho}^{-n} <\infty. 
 \end{align*}
Using the Neumann series, we obtain $(1-(A/\tilde{\rho}))^{-1}\in L(H)$ and therefore $(\tilde{\rho} - A)^{-1} \in L(H)$.
We can repeat the argument for $\theta A$ in place of $A$ for all $\theta \in S_1$.
Thus, $(z-A)^{-1}\in L(H)$ for all $z\in S_{\tilde{\rho}}$.
As $\rho < 1$, we obtain $r(A) < 1$.
\end{proof}

\section{Stable manifolds}

Stable manifolds of difference equations $u_{n+1} = A u_n + f(u_n)$, $n \in \Z_{\geq 0}$, with hyperbolic linear part $A$ and $f(0) = 0$, can be constructed e.g.\ with the graph transform method or the Lyapunov--Perron method (see e.g.\ \cite{Elaydi2005} and the references therein). In this section we extend the Lyapunov--Perron method to general difference equations $\tau u = Au + F(u)$ (Theorem \ref{t:stable-manifold}).

\begin{theorem}[Existence and uniqueness of solution in $\ell_{2,\rho}(\Z; H)$ with $\rho$ in spectral gap]\label{t:existence2}
Let $A \in L(H)$, $\rho > 0$ with $\sigma(A) \cap S_\rho  = \emptyset$, $F \colon \ell_{2,\rho}(\Z; H) \rightarrow \ell_{2,\rho}(\Z; H)$ with
\begin{equation*}
   |F|_{\operatorname{Lip}(\ell_{2,\rho}(\Z; H))} <1/M_\rho
\end{equation*}
for $M_\rho \coloneqq \sup_{z \in S_\rho} |(z-A)^{-1}|_{L(H)} < \infty$.
Then for each $x \in H$ there exists a unique $u \eqqcolon \mathcal S_\rho(x) \in \ell_{2,\rho}(\Z; H)$ with
\begin{equation*}
   \tau u = Au + F(u) + \delta_{-1} x.
\end{equation*}
\end{theorem}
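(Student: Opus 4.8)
The plan is to recast the equation as the fixed point problem $u = \mathcal{T}(u)$ with $\mathcal{T}(u) \coloneqq (\tau - A)^{-1}\bigl(F(u) + \delta_{-1} x\bigr)$ and to apply Banach's fixed point theorem, in direct analogy with the proof of Theorem \ref{t:existence1}, the only change being that $\tau^{-1}$ is replaced by $(\tau - A)^{-1}$ and that the contraction factor is governed by $M_\rho$ rather than by $1/\rho$. Since $\delta_{-1} x$ is supported in the single point $-1$ it lies in $\ell_{2,\rho}(\Z; H)$, and because $\tau - A$ is invertible the equation $\tau u = Au + F(u) + \delta_{-1} x$ is equivalent to $(\tau - A) u = F(u) + \delta_{-1} x$, i.e.\ to $u = \mathcal{T}(u)$. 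So everything reduces to showing $(\tau - A)^{-1} \in L(\ell_{2,\rho}(\Z; H))$ with operator norm at most $M_\rho$.

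First I would make sense of $(\tau - A)^{-1}$. By Lemma \ref{lem:mo} the shift $\tau$ is unitarily equivalent via $\cZ_\rho$ to the multiplication operator $\mathrm{m}$ on $L_2(S_\rho; H)$, and the pointwise operator $A$ is intertwined by $\cZ_\rho$ with the pointwise operator $A$ on $L_2(S_\rho; H)$, so that $\cZ_\rho (\tau - A) \cZ_\rho^* = \mathrm{m} - A$, exactly as used in the proof of Theorem \ref{t:causal}. Here $\mathrm{m} - A$ acts as multiplication by the symbol $z \mapsto (z - A)$. Since $\sigma(A) \cap S_\rho = \emptyset$, the resolvent $z \mapsto (z - A)^{-1}$ is defined on all of $S_\rho$ and, being analytic and hence continuous on the compact set $S_\rho$, it is bounded there; this already gives $M_\rho < \infty$ as asserted, and it exhibits $(\mathrm{m} - A)^{-1}$ as the bounded multiplication operator $f \mapsto \bigl(z \mapsto (z-A)^{-1} f(z)\bigr)$ on $L_2(S_\rho; H)$. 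Transporting back by the unitary $\cZ_\rho$, we obtain $(\tau - A)^{-1} \in L(\ell_{2,\rho}(\Z; H))$.

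The quantitative heart of the argument, and the step I expect to be the main obstacle, is the norm estimate $|(\tau - A)^{-1}|_{L(\ell_{2,\rho}(\Z; H))} \leq M_\rho$. By unitarity of $\cZ_\rho$ this norm equals the norm on $L_2(S_\rho; H)$ of multiplication by the operator-valued symbol $z \mapsto (z-A)^{-1}$. For such a multiplier the norm is the essential supremum $\operatorname*{ess\,sup}_{z \in S_\rho} |(z-A)^{-1}|_{L(H)}$; the upper bound $\leq$ is immediate from the pointwise estimate $|(z-A)^{-1} f(z)|_H \leq |(z-A)^{-1}|_{L(H)} |f(z)|_H$ inside the integral defining the $L_2(S_\rho;H)$ norm, and since the symbol is continuous on $S_\rho$ this essential supremum coincides with the genuine supremum $M_\rho$. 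For the contraction it suffices to use the upper bound.

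With this in hand, for $u, v \in \ell_{2,\rho}(\Z; H)$ I would estimate
\begin{equation*}
   |\mathcal{T}(u) - \mathcal{T}(v)|_{\ell_{2,\rho}(\Z; H)}
   \leq
   M_\rho\, |F(u) - F(v)|_{\ell_{2,\rho}(\Z; H)}
   \leq
   M_\rho\, |F|_{\operatorname{Lip}(\ell_{2,\rho}(\Z; H))}\, |u - v|_{\ell_{2,\rho}(\Z; H)}.
\end{equation*}
Since $|F|_{\operatorname{Lip}(\ell_{2,\rho}(\Z; H))} < 1/M_\rho$, the prefactor $M_\rho\, |F|_{\operatorname{Lip}(\ell_{2,\rho}(\Z; H))} < 1$, so $\mathcal{T}$ is a contraction on the Hilbert space $\ell_{2,\rho}(\Z; H)$. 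Banach's fixed point theorem then yields a unique fixed point $u \eqqcolon \mathcal{S}_\rho(x)$, which by the equivalence above is the unique solution of $\tau u = Au + F(u) + \delta_{-1} x$ in $\ell_{2,\rho}(\Z; H)$, completing the proof.
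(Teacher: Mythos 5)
Your proposal is correct and follows essentially the same route as the paper: pass to the fixed point problem $u = (\tau-A)^{-1}(F(u)+\delta_{-1}x)$, identify $(\tau-A)^{-1}$ via Lemma \ref{lem:mo} with multiplication by $(z-A)^{-1}$ on $L_2(S_\rho;H)$ to get the bound $M_\rho$, and apply Banach's fixed point theorem. If anything, your direct derivation of the boundedness of $(\tau-A)^{-1}$ from the spectral gap $\sigma(A)\cap S_\rho=\emptyset$ is slightly more careful than the paper's citation of Theorem \ref{t:causal}, whose hypotheses ($\rho>r(A)$) are not literally assumed here.
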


\begin{proof}
By Theorem \ref{t:causal}, $(\tau - A)^{-1} \in L(\ell_{2,\rho}(\Z; H))$. Using Lemma \ref{lem:mo}, 
\begin{equation*}
   |(\tau - A)^{-1}|_{L(\ell_{2,\rho}(\Z; H))}
   =
   \sup_{z \in S_\rho}
   |(z - A)^{-1}|_{L(H)}
   = M_\rho.   
\end{equation*}
Let $x \in H$. Then
\begin{align*}
   \ell_{2,\rho}(\Z; H) &\to \ell_{2,\rho}(\Z; H)
\\
   u &\mapsto (\tau - A)^{-1}F(u) + (\tau - A)^{-1}\delta_{-1} x
\end{align*}
is a contraction with unique fixed point $\mathcal S_\rho(x) \in \ell_{2,\rho}(\Z; H)$, since for $u, v \in \ell_{2,\rho}(\Z; H)$
\begin{align*}
   & \; \big|
   \big[ (\tau - A)^{-1}F(u) + (\tau - A)^{-1}\delta_{-1} x \big]
   -
   \big[ (\tau - A)^{-1}F(v) + (\tau - A)^{-1}\delta_{-1} x \big]
   \big|_{\ell_{2,\rho}(\Z; H)}
\\[2ex]
   \leq & \; 
   | (\tau - A)^{-1} |_{L(\ell_{2,\rho}(\Z; H))}
   | F(u) - F(v) |_{\ell_{2,\rho}(\Z; H)}
   \leq 
   M_\rho |F|_{\operatorname{Lip}(\ell_{2,\rho}(\Z; H))}
   |u - v|_{\ell_{2,\rho}(\Z; H)}.
\end{align*}
\phantom{ho}
\end{proof}

Let $\cc = \cc(\Z; H) \coloneqq \setm{x \in H^\Z}{x_k \neq 0 \text{ for only finitely many } k \in \Z}$ denote the vector space of sequences in $H^\Z$ with compact support. $\cc \subseteq \ell_{p,\rho}(\Z; H)$ for $1 \leq p \leq \infty$, $\rho > 0$.

\begin{definition}[Causality]\label{d:causal}
A mapping $F \colon \cc(\Z;H) \rightarrow H^{\Z}$ is called \emph{causal}, if for all $u, v \in \cc$, $a \in \Z$,
\begin{equation*}
   \operatorname{spt} (u - v) \subseteq \Z_{\geq a}
   \quad \Rightarrow \quad
   \operatorname{spt} (F(u) - F(v)) \subseteq \Z_{\geq a}.
\end{equation*}
\end{definition}

\begin{corollary}[Solution operator for initial value problems]\label{c:causal_solution}
Let $A \in L(H)$, $\rho > r(A)$, $F \colon \ell_{2,\rho}(\Z; H) \rightarrow \ell_{2,\rho}(\Z; H)$ with $F(0)=0$, $F|_{\cc}$ causal, and
\begin{equation*}
   |F|_{\operatorname{Lip}(\ell_{2,\rho}(\Z; H))} <1/M_\rho
\end{equation*}
for $M_\rho \coloneqq \sup_{z \in S_\rho} |(z-A)^{-1}|_{L(H)} < \infty$.
Then for each $x \in H$ there exists a unique $u \eqqcolon \mathcal S_\rho(x) \in \ell_{2,\rho}(\Z; H)$ with
\begin{equation*}
   \tau u = Au + F(u) + \delta_{-1} x
\end{equation*}
and the \emph{solution operator} $\mathcal S_\rho$ satisfies
\begin{equation*}
   \operatorname{spt} \mathcal S_\rho(x) \subseteq \Z_{\geq 0}.
\end{equation*}
\end{corollary}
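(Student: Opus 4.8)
The plan is to separate the statement into its two assertions. Existence and uniqueness of $u = \mathcal S_\rho(x)$ is essentially a restatement of Theorem \ref{t:existence2}: since $\rho > r(A) = \sup\setm{|z|}{z \in \sigma(A)}$, every $z \in \sigma(A)$ satisfies $|z| \le r(A) < \rho$, so $\sigma(A) \cap S_\rho = \emptyset$, and the remaining hypotheses ($|F|_{\operatorname{Lip}(\ell_{2,\rho}(\Z; H))} < 1/M_\rho$, $M_\rho < \infty$) are assumed outright. Thus $u$ is the unique fixed point of the contraction $\Psi \colon \ell_{2,\rho}(\Z; H) \to \ell_{2,\rho}(\Z; H)$, $\Psi(v) \coloneqq (\tau - A)^{-1} F(v) + (\tau - A)^{-1} \delta_{-1} x$.

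The real content is the support statement, and I would obtain it by locating the fixed point inside a closed $\Psi$-invariant subspace. Put $V \coloneqq \setm{v \in \ell_{2,\rho}(\Z; H)}{\operatorname{spt} v \subseteq \Z_{\geq 0}}$. Since $|v_k|_H \le \rho^k |v|_{\ell_{2,\rho}(\Z; H)}$ for each $k \in \Z$, the coordinate maps $v \mapsto v_k$ are continuous, so $V = \bigcap_{k < 0} \ker(v \mapsto v_k)$ is a closed subspace, hence complete in the induced metric. If I can show $\Psi(V) \subseteq V$, then $\Psi|_V$ is a contraction on a complete space and has a unique fixed point there; by uniqueness of the fixed point in all of $\ell_{2,\rho}(\Z; H)$ this fixed point must coincide with $\mathcal S_\rho(x)$, giving $\operatorname{spt} \mathcal S_\rho(x) \subseteq \Z_{\geq 0}$.

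The step I expect to be the main obstacle is upgrading the causality of $F|_{\cc}$ to the statement that $\operatorname{spt} v \subseteq \Z_{\geq 0}$ implies $\operatorname{spt} F(v) \subseteq \Z_{\geq 0}$ for \emph{all} $v \in \ell_{2,\rho}(\Z; H)$, since Definition \ref{d:causal} only concerns compactly supported arguments whereas the elements of $V$ generally leave $\cc$. I would argue by truncation and density: for $v \in V$ set $v^{(N)}_k \coloneqq v_k$ if $0 \le k \le N$ and $v^{(N)}_k \coloneqq 0$ otherwise, so that $v^{(N)} \in \cc$ with $\operatorname{spt} v^{(N)} \subseteq \Z_{\geq 0}$ and $v^{(N)} \to v$ in $\ell_{2,\rho}(\Z; H)$. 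Causality of $F|_{\cc}$ applied with the second argument $0$ (using $F(0) = 0$) yields $\operatorname{spt} F(v^{(N)}) \subseteq \Z_{\geq 0}$, and the Lipschitz bound gives $F(v^{(N)}) \to F(v)$ in $\ell_{2,\rho}(\Z; H)$, hence coordinatewise by the same estimate as above; therefore $F(v)_k = \lim_{N} F(v^{(N)})_k = 0$ for every $k < 0$, i.e.\ $\operatorname{spt} F(v) \subseteq \Z_{\geq 0}$.

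With this lemma in hand, $\Psi(V) \subseteq V$ follows quickly: for $v \in V$ the causality of $(\tau - A)^{-1}$ from Theorem \ref{t:causal}(i) (available because $\rho > r(A)$, i.e.\ Theorem \ref{t:causal}(iii) holds) gives $\operatorname{spt} (\tau - A)^{-1} F(v) \subseteq \Z_{\geq 0}$, while Theorem \ref{t:causal}(ii) gives $\operatorname{spt} (\tau - A)^{-1} \delta_{-1} x \subseteq \Z_{\geq 0}$, so the sum $\Psi(v)$ again lies in $V$. This closes the plan.
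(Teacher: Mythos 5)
Your proposal is correct and follows essentially the same route as the paper: truncation plus Lipschitz continuity to transfer the causality of $F|_{\cc}$ to the full space, then invariance of the closed subspace of sequences supported in $\Z_{\geq 0}$ under the contraction $v \mapsto (\tau - A)^{-1}F(v) + (\tau - A)^{-1}\delta_{-1}x$, using Theorem \ref{t:causal}(i) and (ii). The only cosmetic difference is that the paper establishes causality of the extended $F$ for arbitrary pairs $u,v$ and arbitrary $a \in \Z$, whereas you prove only the instance ($v$ versus $0$, $a=0$) actually needed.
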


\begin{proof} \emph{Step 1:} Show that $F \colon \ell_{2,\rho}(\Z; H) \rightarrow \ell_{2,\rho}(\Z; H)$ is causal, i.e.
\begin{equation*}
   \forall u, v \in \ell_{2,\rho}(\Z; H), a \in \Z \colon
   \operatorname{spt} (u-v) \subseteq \Z_{\geq a}
   \; \Rightarrow \;
   \operatorname{spt} (F(u) - F(v)) \subseteq \Z_{\geq a}.
\end{equation*}
To this end, let $u, v \in \ell_{2,\rho}(\Z; H)$ and $a \in \Z$ such that $\operatorname{spt} (u-v) \subseteq \Z_{\geq a}$. We define sequences $(u^{(k)})_{k \in \N}, (v^{(k)})_{k \in \N}$ in $\cc$ by
\begin{equation*}
   u_j^{(k)}
   \coloneqq
   \begin{cases}
      u_j & \text{if } j \in [-k,k],
   \\
      0 & \text{if } j \in \Z \setminus [-k,k],
   \end{cases}
   \qquad (k \in \N),
\end{equation*}
and similarly for $v_j^{(k)}$. Then $u^{(k)} \to u$, $v^{(k)} \to v$ in $\ell_{2,\rho}(\Z; H)$ and for all $k \in \N$ we have $\operatorname{spt} (u^{(k)} - v^{(k)}) \subseteq \Z_{\geq a}$. Using the fact that $F|_{\cc}$ is causal,
\begin{equation*}
      \operatorname{spt} (F(u^{(k)}) - F(v^{(k)}) )
      \subseteq
      \Z_{\geq a}
      \qquad
      (k \in \N).
\end{equation*}
$F$ is Lipschitz continuous and hence the limit $\lim_{k \to \infty} (F(u^{(k)}) - F(v^{(k)})) = F(u) - F(v)$ also satisfies $\operatorname{spt} (F(u) - F(v)) \subseteq \Z_{\geq a}$.

\emph{Step 2:}
Let $x \in H$.
As in the proof of Theorem \ref{t:existence2} we see that $\mathcal S_\rho(x)$ is the unique fixed point of the contraction
\begin{align*}
   \ell_{2,\rho}(\Z; H) &\to \ell_{2,\rho}(\Z; H) ,
\\
   u &\mapsto (\tau - A)^{-1}F(u) + (\tau - A)^{-1}\delta_{-1} x .
\end{align*}
To show that $\operatorname{spt} \mathcal S_\rho(x) \subseteq \Z_{\geq 0}$, let $u \in \ell_{2, \rho}(\Z_{\geq 0}; H)$ and show that the fixed point iteration preserves the support of $u$, i.e.
\begin{equation}\label{e:pres}
    \operatorname{spt} \big((\tau - A)^{-1}F(u) + (\tau - A)^{-1}\delta_{-1} x\big) \subseteq \Z_{\geq 0}.
\end{equation}
By Theorem \ref{t:causal}$(ii)$ we know that $\operatorname{spt} (\tau - A)^{-1}\delta_{-1} x \subseteq \Z_{\geq 0}$.
We have $\operatorname{spt} (u - 0) \subseteq \Z_{\geq 0}$ and as $F(0) = 0$ we have seen in Step 1 that $\operatorname{spt} F(u) = \operatorname{spt} (F(u) - F(0)) \subseteq \Z_{\geq 0}$.
Since $(\tau - A)^{-1}$ is causal by Theorem \ref{t:causal}$(i)$, we deduce that $\operatorname{spt} (\tau - A)^{-1} F(u) \subseteq \Z_{\geq 0}$, proving \eqref{e:pres}.
\end{proof}

\begin{remark}[Riesz projection {\cite[Proposition 6.9]{Hislop:Sigal1996}}]
Let $A \in L(H)$, $\gamma \in \R_{>0}$. If $\sigma(A) \cap S_\gamma  = \emptyset$, then the Riesz projections
\begin{equation*}
   P_\gamma^+
   \coloneqq
   \frac{1}{2 \pi i}
   \int_{S_\gamma} (z - A)^{-1} \,\mathrm{d}z
   \in L(H)
   \qquad \text{and} \qquad
   P_\gamma^-
   \coloneqq
   I - P_\gamma^+ \in L(H)
\end{equation*}
satisfy
\begin{enumerate}
   \item[(i)] $(P_\gamma^\pm)^2 = P_\gamma^\pm$, $P_\gamma^+[H] \oplus P_\gamma^-[H] = H$.
   \item[(ii)] $P_\gamma^\pm A = A P_\gamma^\pm$, 
      $A P_\gamma^\pm[H] = P_\gamma^\pm[H]$, i.e.\ the subspaces $P_\gamma^\pm[H]$ are invariant under $A$.
   \item[(iii)] $\sigma(AP_\gamma^+) = \sigma(A) \cap B(0, \gamma)$, $\sigma(AP_\gamma^-) = \sigma(A) \setminus B(0, \gamma)$.
\end{enumerate}
Moreover, for $\gamma_1, \gamma_2 \in \R_{>0}$ with $\gamma_1 < \gamma_2$,
\begin{enumerate}
   \item[(iv)] $\forall \gamma \in [\gamma_1,\gamma_2] \colon \sigma(A) \cap S_\gamma  = \emptyset
   \;\Rightarrow\;
   P_{\gamma_1}^\pm = P_{\gamma_2}^\pm$.
\end{enumerate}
\end{remark}

\begin{theorem}[Lyapunov--Perron operator]\label{t:Lyapunov--Perron}
Under the assumptions of Theorem \ref{t:existence2}, the \emph{Lyapunov--Perron operator}
\begin{align*}
   \mathcal L_\rho \colon H \times \ell_{2,\rho}(\Z; H) &\to \ell_{2,\rho}(\Z; H)
\\
   (x, u) &\mapsto \chi_{\Z_{\geq 0}} (\tau - A)^{-1} (F(u) + \delta_{-1}x)
\end{align*}
is well-defined and for $x \in H$, $\mathcal L_\rho(x, \cdot)$ is a contraction.
\end{theorem}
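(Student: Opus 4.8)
The plan is to verify the two assertions — well-definedness and the contraction property — directly from the boundedness facts already assembled in the proof of Theorem \ref{t:existence2}, together with the elementary observation that the truncation $\chi_{\Z_{\geq 0}}$ is a norm-decreasing operator on the weighted space.

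First I would establish well-definedness. Under the hypothesis $\sigma(A) \cap S_\rho = \emptyset$ we have $M_\rho < \infty$, and Lemma \ref{lem:mo} identifies $(\tau - A)^{-1}$ with multiplication by $z \mapsto (z-A)^{-1}$ on $L_2(S_\rho;H)$; hence $(\tau - A)^{-1} \in L(\ell_{2,\rho}(\Z;H))$ with operator norm exactly $M_\rho$, as recorded in the proof of Theorem \ref{t:existence2}. For $(x,u) \in H \times \ell_{2,\rho}(\Z;H)$, the impulse $\delta_{-1}x$ has compact support, so $\delta_{-1}x \in \cc \subseteq \ell_{2,\rho}(\Z;H)$, and $F(u) \in \ell_{2,\rho}(\Z;H)$ by hypothesis; thus $F(u) + \delta_{-1}x \in \ell_{2,\rho}(\Z;H)$ and $(\tau-A)^{-1}(F(u)+\delta_{-1}x) \in \ell_{2,\rho}(\Z;H)$. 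Finally, multiplication by $\chi_{\Z_{\geq 0}}$ satisfies $|\chi_{\Z_{\geq 0}} w|_{\ell_{2,\rho}(\Z;H)}^2 = \sum_{n \geq 0} |w_n|_H^2 \rho^{-2n} \leq |w|_{\ell_{2,\rho}(\Z;H)}^2$, so it maps $\ell_{2,\rho}(\Z;H)$ into itself with norm at most $1$. Composing, $\mathcal L_\rho(x,u) \in \ell_{2,\rho}(\Z;H)$, and $\mathcal L_\rho$ is well-defined.

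For the contraction property I would fix $x \in H$ and subtract: for $u,v \in \ell_{2,\rho}(\Z;H)$ the term $(\tau-A)^{-1}\delta_{-1}x$ is independent of $u$ and cancels, leaving
\begin{equation*}
   \mathcal L_\rho(x,u) - \mathcal L_\rho(x,v)
   =
   \chi_{\Z_{\geq 0}} (\tau - A)^{-1}\big(F(u) - F(v)\big).
\end{equation*}
Estimating with the three bounds just collected — $\|\chi_{\Z_{\geq 0}}\|_{L(\ell_{2,\rho}(\Z;H))} \leq 1$, $\|(\tau-A)^{-1}\|_{L(\ell_{2,\rho}(\Z;H))} = M_\rho$, and $|F|_{\operatorname{Lip}(\ell_{2,\rho}(\Z;H))} < 1/M_\rho$ — yields
\begin{equation*}
   |\mathcal L_\rho(x,u) - \mathcal L_\rho(x,v)|_{\ell_{2,\rho}(\Z;H)}
   \leq
   M_\rho\, |F|_{\operatorname{Lip}(\ell_{2,\rho}(\Z;H))}\, |u-v|_{\ell_{2,\rho}(\Z;H)},
\end{equation*}
with contraction constant $M_\rho\, |F|_{\operatorname{Lip}(\ell_{2,\rho}(\Z;H))} < 1$.

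There is no genuine obstacle here: the argument is essentially the contraction estimate of Theorem \ref{t:existence2} with an extra norm-$\leq 1$ factor inserted. The only point worth flagging is that the contraction constant is independent of $x$ — because $x$ enters only through the additive term $\delta_{-1}x$, which drops out of the difference — so that $\mathcal L_\rho(x,\cdot)$ is a contraction \emph{uniformly} in $x$. This uniformity is what will later permit applying the parametrized (uniform) contraction principle to extract continuous, and eventually smooth, dependence of the fixed point on $x$.
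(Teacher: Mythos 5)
Your proposal is correct and follows exactly the route the paper intends: its proof of Theorem \ref{t:Lyapunov--Perron} simply says it is ``along the lines of Theorem \ref{t:existence2}'', i.e.\ the same contraction estimate with the additional norm-one factor $\chi_{\Z_{\geq 0}}$, which is precisely what you wrote out. Your explicit observation that the contraction constant $M_\rho\,|F|_{\operatorname{Lip}(\ell_{2,\rho}(\Z;H))}$ is independent of $x$ is a useful remark but not a deviation from the paper's argument.
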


\begin{proof}
The proof is along the lines of Theorem \ref{t:existence2}.
\end{proof}

\begin{definition}[Hyperbolic]
    Let $A \in L(H)$.
    The operator $A$ is called \emph{hyperbolic}, if $\sigma(A) \cap S_1 = \emptyset$.
\end{definition}

\begin{remark}\label{r:spectrum-sum-convergence}
    Let $A\in L(H)$ such that $\sigma(A)\subseteq \C\setminus B[0,1]$.
    Then by definition of the spectrum, $A^{-1}\in L(H)$ exists. Also $\sigma(A^{-1})\subseteq B(0,1)$ and as the spectrum is closed, $r(A^{-1}) < 1$.
    Let $x\in H$ such that $(A^nx)_{n\in\Z_{\geq 0}}\in \ell_2(\Z_{\geq 0};H)$.
    It follows that $x = 0$.
    Indeed there is an $N\in\Z_{\geq 0}$ such that for every $n>N$
    \begin{equation*}
        |x|_H = |A^{-n}A^nx|_H \leq |A^{-n}|_H|A^nx|_H \leq |A^nx|_H
    \end{equation*}
    by the definition of the spectral radius.
    However, for $(A^nx)_{n\in\Z_{\geq 0}}$ to be square summable, we necessarily have $\lim_{n\rightarrow\infty}|A^nx|_H = 0$.
\end{remark}

\begin{lemma} \label{l:compute-fixed-point-operator}
Let $A \in L(H)$ be hyperbolic.
We write $P \coloneqq P_1^+$, $Q \coloneqq P_1^-$, for the Riesz-projections.
Then for every $v \in \ell_2(\Z; H)$ we have for $n \in \Z$
\begin{align*}
	&(\tau - PAP)^{-1}v = \left(\sum_{k = -\infty}^{n-1} (PAP)^{n-1-k}v_k\right)_n,
	\\
    &(\tau - QAQ)^{-1}v = \left(-\sum_{k = n}^\infty (QAQ)^{n-1-k}v_k\right)_n.
\end{align*}
\end{lemma}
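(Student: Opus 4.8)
The plan is to read the two right-hand sides as the forward (stable) and backward (unstable) Green's functions attached to the spectral splitting $H=P[H]\oplus Q[H]$, and to verify that each inverts the corresponding operator $\tau-PAP$, resp.\ $\tau-QAQ$.

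First I would collect the structural consequences of hyperbolicity. Since $P$ and $Q$ commute with $A$ and $\tau$ acts only on the index, $\tau$ commutes with the componentwise actions of $P$ and $Q$; hence $\ell_2(\Z;H)=\ell_2(\Z;P[H])\oplus\ell_2(\Z;Q[H])$ reduces both $\tau-PAP$ and $\tau-QAQ$. On $P[H]$ the operator $PAP$ acts as $A|_{P[H]}$ with $\sigma(A|_{P[H]})=\sigma(A)\cap B(0,1)$, so $r(A|_{P[H]})<1$; on $Q[H]$ the operator $QAQ$ acts as $A|_{Q[H]}$, which by $\sigma(A)\cap S_1=\emptyset$ is invertible with $r((A|_{Q[H]})^{-1})<1$ (Remark \ref{r:spectrum-sum-convergence}). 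In particular $\sigma(PAP)$ and $\sigma(QAQ)$ each avoid $S_1$, so Lemma \ref{lem:mo} shows $(\tau-PAP)^{-1},(\tau-QAQ)^{-1}\in L(\ell_2(\Z;H))$ (for $PAP$ this is also Theorem \ref{t:causal}, since $1>r(PAP)$). It then suffices to prove that the stated sums define bounded operators inverting $\tau-PAP$, resp.\ $\tau-QAQ$.

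Next I would establish convergence with $\ell_2$-boundedness and then verify the recurrences. For the first formula the spectral radius formula yields $|(PAP)^m|_{L(H)}\le C\gamma^m$ for $m\ge0$ with some $\gamma\in(r(A|_{P[H]}),1)$, so $u_n=\sum_{k=-\infty}^{n-1}(PAP)^{n-1-k}v_k$ is a convolution of $v$ with an operator kernel that is summable in operator norm, and the discrete Young inequality gives $u\in\ell_2(\Z;H)$. That $(\tau-PAP)u=v$ I would check by an index shift: in $u_{n+1}-PAP\,u_n$ the two sums telescope and leave the zeroth-order term $(PAP)^0 v_n=v_n$. For the second formula one argues symmetrically with the backward kernel $|(QAQ)^{-j}|_{L(H)}=|(A|_{Q[H]})^{-j}|_{L(Q[H])}\le C'\delta^j$, $\delta\in(r((A|_{Q[H]})^{-1}),1)$, where negative powers are defined through $(QAQ)^{-1}:=(A|_{Q[H]})^{-1}Q$; again the kernel is summable, the future sum $u_n=-\sum_{k=n}^\infty(QAQ)^{n-1-k}v_k$ is $\ell_2$-bounded, and the same telescoping identifies it as the inverse on the unstable part.

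The step I expect to require the most care is the bookkeeping at the ends of the two sums together with the treatment of the complementary Riesz subspace. Because $QAQ$ is not invertible on all of $H$, its negative powers only make sense after projecting onto $Q[H]$, and one must verify the algebraic identities such as $(QAQ)(QAQ)^{-1}=Q$ that drive the telescoping. One should also track which zeroth-order boundary term survives the telescoping---the identity for the $PAP$-sum, which is what reproduces $v_n$, versus the projection for the $QAQ$-sum---so that the contribution of $\tau$ on the complementary sector (acting as the backward shift $\tau^{-1}$) is correctly accounted for; this is the point at which the convention $(\cdot)^0=\id$ and the dichotomy splitting must be reconciled. Once a bounded operator satisfying the defining equation is in hand, the uniqueness of the inverse secured in the first step completes the identification.
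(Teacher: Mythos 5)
Your proposal is correct and follows essentially the same route as the paper: establish convergence of the two sums from the spectral radius bounds $r(A|_{P[H]})<1$ and $r((A|_{Q[H]})^{-1})<1$ supplied by the Riesz projections, then verify by an index shift that $\tau-PAP$ (resp.\ $\tau-QAQ$) applied to the sum telescopes to $v$ (resp.\ $-v$). You additionally make explicit two points the paper leaves implicit --- that the inverses exist a priori (so a one-sided verification suffices) and that the negative powers of $QAQ$ must be read through $(A|_{Q[H]})^{-1}Q$ --- which is careful rather than divergent.
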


\begin{proof}
Note that $PAP = AP = A|_{P[H]}$ on $P[H]$ and $\sigma(A|_{P[H]}) = \sigma(A) \cap B(0, 1)$ by the Riesz-projection theorem.
Therefore $r(A|_{P[H]}) < 1$.
Similarly $QAQ = AQ = A|_{Q[H]}$ on $Q[H]$ and $\sigma(A|_{Q[H]}) = \sigma(A) \setminus B[0, 1]$.
In particular, $0$ is in the resolvent set of $A|_{Q[H]}$ and $r((A|_{Q[H]})^{-1}) < 1$.
By definition of the spectral radius both sums of Lemma \ref{l:compute-fixed-point-operator} exist for all $v \in \ell_2(\Z; H)$.

For every $v \in \ell_2(\Z;H)$ we compute for $n \in \Z$
\begin{align*}
    (\tau - PAP) &\left(\sum_{k = -\infty}^{n-1} (PAP)^{n-1-k}v_k\right)_n
    \\
    = &\left(\sum_{k = -\infty}^n (PAP)^{n-k}v_k - \sum_{k = -\infty}^{n-1} (PAP)^{n-k}v_k\right)_n = v
\end{align*}
and
\begin{align*}
    (\tau - QAQ) &\left(\sum_{k = n}^{\infty} (QAQ)^{n-1-k}v_k\right)_n
    \\
    = &\left(\sum_{k = n+1}^{\infty} (QAQ)^{n-k}v_k - \sum_{k = n}^{\infty} (QAQ)^{n-k}v_k\right)_n = -v.
\end{align*}
The assertion follows by rearranging the terms.
\end{proof}

\begin{theorem}[Characterization of Lyapunov--Perron fixed point]\label{t:Lyapunov--Perron-FP}
Let $A \in L(H)$ be hyperbolic, $F \colon \ell_2(\Z; H) \rightarrow \ell_2(\Z; H)$ with
\begin{equation*}
	|F|_{\operatorname{Lip}(\ell_2(\Z;H))} < 1/M_1
\end{equation*}
for $M_1 \coloneqq \sup_{z \in S_1} |(z-A)^{-1}|_{L(H)} < \infty$.
Let $\xi \in P[H]$ and  $u \in  \ell_{2}(\Z; H)$. 
Then the following statements are equivalent:

(i) $u$ is the unique fixed point of the Lyapunov--Perron operator $\mathcal L_1(\xi, \cdot)$.

(ii) $\operatorname{spt} u \subseteq \Z_{\geq 0}$ and for $n \in \Z_{\geq 0}$
\begin{align*}
   P u_n &= (PAP)^n \xi + \sum_{k = -\infty}^{n-1} (PAP)^{n-1-k}PF(u)_k,
\\
   Q u_n &= - \sum_{k = n}^{\infty} (QAQ)^{n-1-k} QF(u)_k.
\end{align*}
\end{theorem}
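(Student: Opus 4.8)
The plan is to unwind the fixed point equation $u=\mathcal L_1(\xi,u)$ into components along the $A$-invariant splitting $H=P[H]\oplus Q[H]$ and then read off the two series in (ii) directly from the explicit resolvent formulas of Lemma \ref{l:compute-fixed-point-operator}. Throughout I would use that the Riesz projections $P,Q$ commute with $A$ and, acting pointwise, also with $\tau$, hence with $(\tau-A)^{-1}$, so that $(\tau-A)^{-1}$ respects the decomposition $\ell_2(\Z;H)=\ell_2(\Z;P[H])\oplus\ell_2(\Z;Q[H])$.

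For (i) $\Rightarrow$ (ii), first I would record that a fixed point $u=\chi_{\Z_{\geq 0}}(\tau-A)^{-1}(F(u)+\delta_{-1}\xi)$ lies in the range of $\chi_{\Z_{\geq 0}}$, so $\operatorname{spt} u\subseteq\Z_{\geq 0}$ is immediate. Writing $w\coloneqq(\tau-A)^{-1}(F(u)+\delta_{-1}\xi)$, so that $u=\chi_{\Z_{\geq 0}}w$, I would next apply $P$ and $Q$. On $P[H]$-valued sequences $A$ acts as $PAP$ (because $AP=PAP$ while $AQ=QA$ kills the cross terms) and $r(PAP)<1$, so $Pw=(\tau-PAP)^{-1}Pg$ with $g\coloneqq F(u)+\delta_{-1}\xi$; similarly $Qw=(\tau-QAQ)^{-1}Qg$. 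Using $P\xi=\xi$ and $Q\xi=0$ gives $Pg=PF(u)+\delta_{-1}\xi$ and $Qg=QF(u)$. Substituting these into the two formulas of Lemma \ref{l:compute-fixed-point-operator} and isolating the single impulse term, which for $n\geq 0$ contributes $(PAP)^{n-1-(-1)}\xi=(PAP)^n\xi$ in the $P$-component and nothing in the $Q$-component, yields exactly the two displayed identities of (ii), since $u_n=w_n$ for $n\geq 0$.

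For (ii) $\Rightarrow$ (i) the same computation runs backwards. Given $u$ as in (ii), I would set $w\coloneqq(\tau-A)^{-1}(F(u)+\delta_{-1}\xi)$ and verify, component by component, that $Pw_n=Pu_n$ and $Qw_n=Qu_n$ for $n\geq 0$, so $w_n=u_n$ there, while $\operatorname{spt} u\subseteq\Z_{\geq 0}$ handles $n<0$; hence $\chi_{\Z_{\geq 0}}w=u$, i.e.\ $u=\mathcal L_1(\xi,u)$. Uniqueness of this fixed point is not reproved but inherited from Theorem \ref{t:Lyapunov--Perron}, where $\mathcal L_1(\xi,\cdot)$ is shown to be a contraction.

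The main obstacle I anticipate is bookkeeping rather than conceptual: justifying cleanly that $(\tau-A)^{-1}$ restricts to $(\tau-PAP)^{-1}$ on the $P$-part and to $(\tau-QAQ)^{-1}$ on the $Q$-part, so that Lemma \ref{l:compute-fixed-point-operator}, stated on all of $\ell_2(\Z;H)$, may legitimately be applied to $Pg$ and $Qg$, and keeping the index shift in the impulse term straight so that the initial datum $\xi$ reappears as $(PAP)^n\xi$ and is correctly absent from the $Q$-component. Note that causality of $F$ plays no role here; only the Lipschitz bound (for the contraction, via Theorem \ref{t:Lyapunov--Perron}) and hyperbolicity of $A$ (for the splitting and the two resolvent formulas) enter.
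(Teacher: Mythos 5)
Your proposal is correct and follows essentially the same route as the paper: both directions rest on the commutation of the Riesz projections with $\tau$, $A$ and hence with $(\tau-A)^{-1}$, the identities $P(\tau-A)^{-1}=(\tau-PAP)^{-1}P$ and $Q(\tau-A)^{-1}=(\tau-QAQ)^{-1}Q$, and the explicit series of Lemma \ref{l:compute-fixed-point-operator}, with the impulse $\delta_{-1}\xi$ producing the $(PAP)^n\xi$ term and vanishing under $Q$. The only organizational difference is in (ii)$\Rightarrow$(i), where the paper recombines $Pu_n+Qu_n$ into a variation-of-constants formula and checks $n=0$ separately, whereas you verify the fixed-point equation componentwise; both are valid.
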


\begin{proof}

$(i)\Rightarrow(ii)$:
Let $u \in \ell_2(\Z; H)$ be the unique fixed point.
By definition of the Lyapunov--Perron operator, it follows that $\operatorname{spt}(u)\subseteq\Z_{\geq 0}$.
Let $n\geq 0$.
We have $((\tau - A)^{-1}\delta_{-1}\xi)_n = A^n\xi$ and $P\xi = \xi$ and $Q\xi = 0$.
Using the equivalence $(iii) \Leftrightarrow (v)$ of Proposition \ref{p:linear-initial-value-problem}
\begin{align*}
    &Pu_n = (P\chi_{\Z_{\geq 0}}(\tau - A)^{-1}(F(u) + \delta_{-1}\xi))_n = (\chi_{\Z_{\geq 0}}(\tau - A)^{-1}PF(u))_n + (PAP)^n\xi,
    \\
    &Qu_n = (Q\chi_{\Z_{\geq 0}}(\tau - A)^{-1}(F(u) + \delta_{-1}\xi))_n = (\chi_{\Z_{\geq 0}}(\tau - A)^{-1}QF(u))_n,
\end{align*}
as $P$ and $Q$ commute with all linear operators involved.
Thus we have to compute $(\tau - A)^{-1}PF(u)$ and $(\tau - A)^{-1}QF(u)$ to show the first and second equation of $(ii)$.

Using the properties of the Riesz-projections, we compute
\begin{align*}
	&P(\tau - A)^{-1} = (\tau - A)^{-1}P = (\tau - PAP)^{-1}P,
	\\
    &Q(\tau - A)^{-1} = (\tau - A)^{-1}Q = (\tau - QAQ)^{-1}Q.
\end{align*}
Therefore $(ii)$ follows when setting either $v = PF(u)$ or $v = QF(u)$ in Lemma \ref{l:compute-fixed-point-operator}.

$(ii)\Rightarrow(i)$:
For every $n\geq 0$ we compute
\begin{align*}
    u_n &= Pu_n + Qu_n
    \\
        &= (PAP)^n\xi + \sum_{k = -\infty}^{n-1} (PAP)^{n-1-k}PF(u)_k - \sum_{k = n}^{\infty}(QAQ)^{n-1-k}QF(u)_k
    \\
        &= A^n\xi + \sum_{k = 0}^{n-1} (PAP)^{n-1-k}PF(u)_k + \sum_{k=0}^{n-1}(QAQ)^{n-1-k}QF(u)_k
    \\
            &\quad + \sum_{k = -\infty}^{-1} (PAP)^{n-1-k}PF(u)_k - \sum_{k = 0}^{\infty}(QAQ)^{n-1-k}QF(u)_k
    \\
        &= A^n\xi + \sum_{k = 0}^{n-1} (PA^{n-1-k}F(u)_k + QA^{n-1-k}F(u)_k)
    \\
			&\quad + A^n\left(\sum_{k = -\infty}^{-1} (PAP)^{-1-k}PF(u)_k - \sum_{k = 0}^{\infty}(QAQ)^{-1-k}QF(u)_k\right)
    \\
        &= A^n\left(\xi + \sum_{k = -\infty}^{-1} (PAP)^{-1-k}PF(u)_k - \sum_{k = 0}^{\infty}(QAQ)^{-1-k}QF(u)_k\right)
    \\
        &\qquad + \sum_{k = 0}^{n-1} A^{n-1-k}F(u)_k.
\end{align*}
By variation of constants we see that $u_{n+1} = Au_n + F(u)_n = Au_n + F(u)_n + \delta_{-1,n}\xi$ for every $n \geq 0$.
That is $u_n = ((\tau - A)^{-1}(F(u) + \delta_{-1}\xi))_n$ for all $n>0$.
Furthermore we have 
\begin{align*}
	\big((\tau - A)^{-1}(F(u) + \delta_{-1}\xi)\big)_0 &= \big((\tau - A)^{-1}(P+Q)(F(u) + \delta_{-1}\xi)\big)_0
	\\
		&= \big((\tau - A)^{-1}PF(u)\big)_0 + \xi + \big((\tau - A)^{-1}QF(u)\big)_0.
\end{align*}
We have computed the expressions $((\tau - A)^{-1}PF(u))_0$ and $((\tau - A)^{-1}QF(u))_0$ in Lemma \ref{l:compute-fixed-point-operator} and they coincide with those of $u_0$.
Therefore $u = \chi_{\Z_{\geq 0}} (\tau - A)^{-1}(F(u) + \delta_{-1}\xi)$, since $\operatorname{spt}(u)\subseteq\Z_{\geq 0}$.
\end{proof}

\begin{definition}[Admissibility condition]
Let $\rho_1, \rho_2 > 0$.
\\
Then $F \colon \cc(\Z; H) \to \ell_{2,\rho_1}(\Z; H) \cap \ell_{2,\rho_2}(\Z; H)$ is called \emph{$(\rho_1, \rho_2)$-admissible} if
\begin{equation*}
   F \colon  \cc(\Z; H) \subseteq \ell_{2,\rho_i}(\Z; H) \to \ell_{2,\rho_i}(\Z; H)
\end{equation*}
is Lipschitz for $i \in \set{1,2}$.

We denote the unique Lipschitz extensions of $F$ to $\ell_{2,\rho_i}(\Z; H)$ for $i \in \set{1,2}$ again by $F$.
\end{definition}

We are now in a position to prove the existence of a stable manifold of the trivial solution $u = 0$.

\begin{theorem}[Stable manifold]\label{t:stable-manifold}
Let $A \in L(H)$ be hyperbolic, $\rho > r(A)$.
Let $F$ be $(1, \rho)$-admissible, such that
\begin{equation*}
   |F|_{\operatorname{Lip}(\ell_{2}(\Z; H))} <1/M_1
   \quad \text{and} \quad
   |F|_{\operatorname{Lip}(\ell_{2,\rho}(\Z; H))} <1/M_\rho.
\end{equation*}
We also assume that $F$ is causal and $F(0) = 0$.
Let $P \coloneqq P_1^+$, $Q \coloneqq P_1^-$ denote the Riesz projections, $T(\xi) \in \ell_{2}(\Z; H)$ the unique fixed point of the Lyapunov--Perron operator $\mathcal L_1(\xi, \cdot)$ for $\xi \in P[H]$.
Then the stable set
\begin{equation*}
   W^s \coloneqq \setm{x \in H}{\mathcal S_\rho(x) \in \ell_{2}(\Z; H)}
\end{equation*}
is isomorphic to the graph of 
$w^s \colon P[H] \rightarrow Q[H]$, $\xi \mapsto Q \big(T(\xi)(0)\big)$,
\begin{equation*}
   W^s = \setm{\xi + w^s(\xi) \in H}{\xi \in P[H]},
\end{equation*}
i.e.\ $W^s$ is a manifold, called \emph{stable manifold (of $\tau u = Au + F(u) + \delta_{-1} x$ in $\ell_{2,\rho}(\Z; H)$ at the fixed point $0$)}.
\end{theorem}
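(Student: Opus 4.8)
The plan is to establish the set equality defining $W^s$ by identifying, for each relevant initial value, the $\ell_{2,\rho}$-solution $\mathcal S_\rho(x)$ with a Lyapunov--Perron fixed point $T(\xi)$, and then reading off that $x$ lies on the graph of $w^s$. Throughout I use that, by Corollary \ref{c:causal_solution}, $\mathcal S_\rho(x)$ is supported in $\Z_{\geq 0}$ and satisfies $\mathcal S_\rho(x)_0 = x$ (as in Theorem \ref{t:initial_value_problem}), and that causality with $F(0)=0$ gives $\operatorname{spt} F(u) \subseteq \Z_{\geq 0}$ whenever $\operatorname{spt} u \subseteq \Z_{\geq 0}$, so that $F(u)_k = 0$ for $k<0$.

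First I would prove $W^s \subseteq \setm{\xi + w^s(\xi)}{\xi \in P[H]}$. Let $x \in W^s$, put $u := \mathcal S_\rho(x) \in \ell_2(\Z;H)$ and $\xi := P u_0 = P x$. Since $u$ solves $u_{n+1} = A u_n + F(u)_n$ on $\Z_{\geq 0}$, variation of constants together with $PA=AP$ gives $P u_n = (PAP)^n \xi + \sum_{k=0}^{n-1}(PAP)^{n-1-k} PF(u)_k$, which is the $P$-component of Theorem \ref{t:Lyapunov--Perron-FP}(ii) since $F(u)_k=0$ for $k<0$. The decisive point is the $Q$-component: with $B := QAQ = A|_{Q[H]}$ (invertible, $\sigma(B)\subseteq \C\setminus B[0,1]$ by hyperbolicity), variation of constants and factoring out $B^n$ yield
\[
   Q u_n = B^n\Big(Q u_0 + \sum_{k=0}^{\infty} B^{-1-k} QF(u)_k\Big) - \sum_{k=n}^{\infty} B^{n-1-k} QF(u)_k,
\]
where the tail sum converges (negative powers of $B$ decay geometrically) and lies in $\ell_2$ tending to $0$. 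Hence $n\mapsto B^n\big(Q u_0 + \sum_{k\geq 0} B^{-1-k}QF(u)_k\big)$ is square-summable, and Remark \ref{r:spectrum-sum-convergence} applied to the expanding operator $B$ forces the bracket to vanish, i.e.\ $Q u_0 = -\sum_{k=0}^{\infty} B^{-1-k} QF(u)_k$. Substituting back recovers the $Q$-component of Theorem \ref{t:Lyapunov--Perron-FP}(ii), so $u = T(\xi)$ and $x = u_0 = \xi + Q u_0 = \xi + Q\big(T(\xi)(0)\big) = \xi + w^s(\xi)$.

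For the reverse inclusion, fix $\xi \in P[H]$ and set $x := \xi + w^s(\xi)$, $u := T(\xi)$. Evaluating the formulas of Theorem \ref{t:Lyapunov--Perron-FP}(ii) at $n=0$ gives $P u_0 = \xi$ (the sum over $k<0$ vanishes) and $Q u_0 = Q\big(T(\xi)(0)\big) = w^s(\xi)$, hence $u_0 = x$. By the computation $(ii)\Rightarrow(i)$ of Theorem \ref{t:Lyapunov--Perron-FP}, $u$ solves $u_{n+1}=Au_n+F(u)_n$ on $\Z_{\geq 0}$ with $\operatorname{spt} u \subseteq \Z_{\geq 0}$, which by Theorem \ref{t:initial_value_problem} is equivalent to $\tau u = Au + F(u) + \delta_{-1} x$. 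Using $\ell_2(\Z_{\geq 0};H) \subseteq \ell_{2,\rho}(\Z_{\geq 0};H)$ (valid for $\rho\geq 1$, and $\rho>r(A)>1$ whenever the unstable part $Q$ is nontrivial; the case $Q=0$ being immediate), we get $u \in \ell_{2,\rho}(\Z;H)$, so uniqueness in Theorem \ref{t:existence2} identifies $u = \mathcal S_\rho(x)$. Thus $\mathcal S_\rho(x)=T(\xi)\in\ell_2(\Z;H)$, i.e.\ $x\in W^s$.

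Finally, $\xi \mapsto \xi + w^s(\xi)$ is a bijection $P[H]\to W^s$: it is injective since applying $P$ recovers $\xi$ (as $w^s(\xi)\in Q[H]$ and $PQ=0$), and surjective by the two inclusions, exhibiting $W^s$ as the graph of $w^s$, hence a manifold modelled on $P[H]$. I expect the main obstacle to be the $Q$-component rigidity argument above: the claim that square-summability of $Q\mathcal S_\rho(x)$ uniquely pins down $Q u_0$ via the backward sum. This is precisely where hyperbolicity and the expanding dynamics on $Q[H]$ (Remark \ref{r:spectrum-sum-convergence}) are indispensable, and where one must carefully justify the convergence and vanishing of the infinite tail $\sum_{k\geq n} B^{n-1-k}QF(u)_k$.
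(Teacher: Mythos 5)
Your proposal is correct and follows essentially the same route as the paper: both directions are handled by passing through the variation-of-constants formula, projecting with $P$ and $Q$, using the expansivity of $QAQ$ on $Q[H]$ (Remark \ref{r:spectrum-sum-convergence}) to pin down the $Q$-component of the initial value, and identifying $\mathcal S_\rho(x)$ with the Lyapunov--Perron fixed point $T(\xi)$ via Theorem \ref{t:Lyapunov--Perron-FP}. Your treatment of the embedding $\ell_2(\Z_{\geq 0};H)\subseteq\ell_{2,\rho}(\Z_{\geq 0};H)$ and of the degenerate case $Q=0$ also matches the paper's case distinction.
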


\begin{proof}
An important observation is that as $F$ is causal and $F(0) = 0$, we deduce that for all $u \in \ell_2(\Z;H)$
    with $\operatorname{spt} u = \operatorname{spt}(u -  0) \subseteq \Z_{\geq 0}$ we have $\operatorname{spt} F(u) = \operatorname{spt}(F(u) - F(0)) \subseteq\Z_{\geq 0}$.
This simplifies the first sum in Theorem \ref{t:Lyapunov--Perron-FP}$(ii)$.

$(\subseteq)$:
Let $x\in W^s$.
We set $\xi\coloneqq P(x)\in P[H]$ and $\eta\coloneqq Q(x)\in Q[H]$, such that $x = \xi + \eta$.
We will prove that $\eta = w^s(\xi) = Q(T(\xi)(0))$ and even more that $u \coloneqq \mathcal S_\rho(x) = T(\xi)$ (note that $(\mathcal S_\rho(x))_0 = x = \xi + \eta$).
By Corollary \ref{c:causal_solution} we have $\operatorname{spt}(u) \subseteq \Z_{\geq 0}$, and as $x \in W^s$ we know that $u \in \ell_2(\Z;H)$.

We prove the two equalities of Theorem \ref{t:Lyapunov--Perron-FP}$(ii)$.
Let $n \geq 0$.
By Theorem \ref{t:initial_value_problem} we have the variation of constants formula
\begin{equation}\label{vofc}
    u_n = A^nx + \sum_{k = 0}^{n-1} A^{n-1-k}F(u)_k.
\end{equation}
Multiplying \eqref{vofc} with $P$ results in
\begin{align*}
    Pu_n &= (PAP)^n\xi + \sum_{k=0}^{n-1} (PAP)^{n-1-k}PF(u)_k
    \\
    &= (PAP)^n\xi + \sum_{k=-\infty}^{n-1} (PAP)^{n-1-k}PF(u)_k.
\end{align*}
By multiplying \eqref{vofc} with $Q$, we compute
\begin{align*}
    Qu_n &= (QAQ)^n\eta + \sum_{k=0}^{n-1} (QAQ)^{n-1-k}QF(u)_k
    \\
    & = (QAQ)^n\eta + \left(\sum_{k=0}^{\infty}(QAQ)^{n-1-k}QF(u)_k - \sum_{k=n}^{\infty}(QAQ)^{n-1-k}QF(u)_k\right)
    \\
    & = (QAQ)^n\left(\eta + \sum_{k=0}^{\infty}(QAQ)^{-1-k}QF(u)_k\right) - \sum_{k=n}^{\infty}(QAQ)^{n-1-k}QF(u)_k).
\end{align*}
By Lemma \ref{l:compute-fixed-point-operator} we know that $\left( \sum_{k=n}^{\infty}(QAQ)^{n-1-k}QF(u)_k) \right)_n = (\tau - QAQ)^{-1}QF(u) \in \ell_2(\Z;H)$ and so
\begin{equation*}
    \left((QAQ)^n\left(\eta + \sum_{k=0}^{\infty}(QAQ)^{-1-k}QF(u)_k\right)\right)_n\in \ell_2(\Z;H).
\end{equation*}
In Remark \ref{r:spectrum-sum-convergence} we have noted that necessarily $\eta + \sum_{k=0}^{\infty}(QAQ)^{-1-k}QF(u)_k) = 0$.
By Theorem \ref{t:Lyapunov--Perron-FP}, $u$ is the unique fixed point of the Lyapunov--Perron operator $\mathcal L_1(\xi, \cdot)$.

$(\supseteq)$:
Let $(\xi,\eta)\in \operatorname{graph}(w^s)$, that is $\xi\in P[H]$ and $Q(T(\xi)_0) = - \sum_{k = 0}^{\infty} (QAQ)^{-1-k} \cdot QF(u)_k = \eta$ by Theorem \ref{t:Lyapunov--Perron-FP}$(ii)$. 
We will verify that $\xi+\eta\in W^s$, that is $\mathcal S_\rho(\xi+\eta)\in \ell_2(\Z;H)$.
To this end we show that $u\coloneqq T(\xi) = \mathcal S_\rho(\xi+\eta)$.
By the variation of constants formula  \eqref{vofc} we know that
\begin{align*}
    u = \mathcal S_\rho(\xi + \eta)  \quad\Leftrightarrow\quad  &u\in \ell_{2,\rho}(\Z;H) \textrm{ and } \operatorname{spt}(u)\subseteq\Z_{\geq 0}\\
        &\textrm{and } u_n = A^n(\xi + \eta) + \sum_{k = 0}^{n-1} A^{n-1-k} F(u)_k.
\end{align*}
By Theorem \ref{t:Lyapunov--Perron-FP}$(ii)$ we have $\operatorname{spt}(u)\subseteq\Z_{\geq 0}$.
Therefore if $\rho > 1$ we see that $u\in \ell_2(\Z_{\geq 0}; H)\subseteq \ell_{2,\rho}(\Z_{\geq 0}; H)$.
If $\rho < 1$ the statement of the theorem is trivial as $W^s = H$ on the one hand and $P = I$ and $Q = 0$ on the other hand. It remains to establish the formula for $u_n$. For this, we see that Theorem \ref{t:Lyapunov--Perron-FP}$(ii)$ also yields
\begin{align*}
    u_n &= Pu_n + Qu_n
    \\
    &= (PAP)^n\xi + \sum_{k = -\infty}^{n-1} (PAP)^{n-1-k}PF(u)_k - \sum_{k=n}^\infty (QAQ)^{n-1-k}QF(u)_k
    \\
    &= A^n\xi + P\sum_{k=0}^{n-1}A^{n-1-k}F(u)_k + Q\sum_{k=0}^{n-1}A^{n-1-k}F(u)_k - \sum_{k = 0}^\infty (QAQ)^{n-1-k}QF(u)_k
    \\
    &= A^n\xi + \sum_{k=0}^{n-1}A^{n-1-k}F(u)_k - A^n\sum_{k=0}^\infty(QAQ)^{-1-k}QF(u)_k
    \\
    &= A^n \left(\xi - \sum_{k=0}^\infty(QAQ)^{-1-k}QF(u)_k\right) + \sum_{k=0}^{n-1}A^{n-1-k}F(u)_k
    \\
    &= A^n(\xi+\eta) + \sum_{k=0}^{n-1}A^{n-1-k}F(u)_k.
\end{align*}
\phantom{jp}
\end{proof}

\bibliography{difference.bib}{}
\bibliographystyle{abbrv}

\end{document}